\definecolor{webgreen}{rgb}{0,.5,0}
\definecolor{webbrown}{rgb}{.6,0,0}
\newcommand{\seqnum}[1]{\href{http://oeis.org/#1}{\underline{#1}}}
\begin{document}

\theoremstyle{plain}
\newtheorem{theorem}{Theorem}
\newtheorem{corollary}[theorem]{Corollary}
\newtheorem{lemma}[theorem]{Lemma}
\newtheorem{proposition}[theorem]{Proposition}

\theoremstyle{definition}
\newtheorem{definition}[theorem]{Definition}
\newtheorem{example}[theorem]{Example}
\newtheorem{conjecture}[theorem]{Conjecture}
\newtheorem{problem}[theorem]{Problem}

\theoremstyle{remark}
\newtheorem{remark}[theorem]{Remark}

\begin{center}
\vskip 1cm{\LARGE\bf
Digit Preserving Multiplication
\vskip .11in
in Continued Fraction Representations
}
\vskip 1cm
\large
Benjamin V. Holt \\
Humboldt State University\\
1 Harpst Street \\
Arcata, CA 95521 \\
USA\\
\href{mailto:bvh6@humboldt.edu}{\tt bvh6@humboldt.edu}\\
\end{center}

\begin{abstract}
A permutiple is a number which is an integer multiple of some permutation of its digits. 
A well-known example is 9801 since it is an integer multiple of its reversal, 1089.
In this paper, we consider the permutiple problem in an entirely different setting: continued fractions.
We pose the question of when the simple continued fraction representation of a rational number is
an integer multiple of a permutation of its partial quotients (or digits, as we shall call them).
We develop some general results and apply them to finding new examples.
In doing so, we attempt to classify all 2, 3, and 4-digit continued fraction permutiples in terms of basic permutiple types 
which we discover along the way. We also generate new examples from old by finding conditions which guarantee
that digit-string concatenation yields other permutiples.
\end{abstract}

\section{Introduction} 
Representations of numbers are a topic of fascination for both amateur and professional mathematicians alike.
A well-known case in point is the following puzzle: find digits $A$, $B$, $C$, and $D$ such that
$4\cdot ABCD=DCBA$. This problem, along with its solution, $4\cdot 2178=8712$,
has appeared in its fair share of puzzle books and columns.
In his oft quoted essay, \textit{A Mathematician's Apology}, G. H. Hardy uses  
such numbers (known as both \textit{reverse multiples} \cite{kendrick_1,sloane,young_1} 
and \textit{palintiples} \cite{holt,holt_2,holt_3}) to illustrate 
an uninteresting theorem; 2178 and 1089 are the only four-digit numbers whose reversal yields an integer
multiple of itself. He goes on to tell his readers that such a result is
``suitable for puzzle columns,'' but ``not capable of any significant generalization.'' 
As many have been quick to point out \cite{holt,kendrick_1,pudwell,young_1}, this article being no exception, 
Hardy was perhaps mistaken in his latter assessment; 
the problem, as demonstrated by the papers cited thus far, generalizes quite naturally and has led to other areas of research
including the notion of \textit{Young graphs} \cite{kendrick_1, sloane} 
which describe the digit-carry structure of these numbers. 

Other digit permutation problems have received professional attention too. 
Cyclic digit permutations such as $5 \cdot 142857=714285$ are also well-studied \cite{guttman,kalman},
and, at least in comparison to reverse multiples, are well-understood.
We note that we are aware of only one work \cite{holt_3} which addresses general digit permutations.
The reason for such lack of generality is 
that even for a particular permutation, solving the problem can be far from trivial. We take reverse multiples as an example;
only a small number of reverse multiple types, as determined by Young graph isomorphism,
are well understood, leaving a multitude of others as yet unexplored 
\cite{holt_2, kendrick_1, kendrick_2}.

In this paper, we shift the setting from radix representations of integers
to continued fractions; we ask when the continued fraction representation 
of a rational number is an integer multiple of a continued fraction with the same partial quotients 
(which for the duration of this paper we shall refer to as digits).
We first consider some general properties of these new permutiples, and with our results, generate 
new examples. Along the way, we discover several fundamental permutiple types and attempt to classify 
all permutiples of 4 digits or less in terms of these types.
We then consider methods for obtaining new permutiples from old. In particular, we focus on the problem of when
digit-string concatenation yields new permutiples.
In our conclusion, we briefly consider the infinite case and survey analogous ideas which carry over from finite permutiples.
We shall only consider simple continued fractions leaving more general representations to the ambitious reader.

\section{Problem formulation and basic results}

\begin{definition}
 Using the notation $[a_0; a_{1}, \ldots, a_n]$ to represent the finite simple continued fraction
\[
 a_0+\frac{1}{a_{1}+\frac{1}{\ddots+\frac{1}{a_{n}}}},
\]
we let $k>1$ be a natural number and $\sigma$ be a permutation on $n+1$ symbols.
Then the number $[a_0; a_{1}, \ldots, a_n]$ is a \textit{continued fraction $(\sigma,k)$-permutiple} provided
\[
 [a_0; a_{1}, \ldots, a_n]=k[a_{\sigma(0)}; a_{\sigma(1)}, \ldots, a_{\sigma(n)}].
\]
\end{definition}

Since there will be no possibility for confusion between permutiples in continued fraction and radix representation settings, 
we shall, for the sake of cleaner exposition, refer to {\it continued fraction $(\sigma,k)$-permutiples} as simply
{\it $(\sigma,k)$-permutiples}.
In this same spirit, using terminology similar to Young \cite{young_1}, Sloane \cite{sloane}, and Kendrick \cite{kendrick_1}, 
we shall refer to continued fraction $(\sigma,k)$-permutiples for which $\sigma$ is the reversal permutation 
as {\it $k$-reverse multiples}.

\begin{example}
The continued fraction $[7;1,3]$ is a 2-reverse multiple since $[7;1,3]=2\cdot [3;1,7]$.
\end{example}

We shall also keep in mind a sequence related to the digits of our continued fraction expansion; 
defining $G:[0,\infty)\rightarrow [0,1)$ by $G(0)=0$ and 
$G(x)=\frac{1}{x}-\lfloor \frac{1}{x} \rfloor$ when $x>0$ (left shift operator),
we find the continued fraction expansion, $[a_0; a_1, a_2, \ldots]$, of a real number $r$ by first
generating the orbit of $\gamma_0=r-\lfloor r \rfloor$ under $G$.
We then obtain the digits of our continued fraction from the orbit $\{\gamma_0$, $\gamma_1$, $\ldots\}$
according to the rule $a_0=\lfloor r \rfloor$, and $a_{j+1}=\lfloor \frac{1}{\gamma_j}\rfloor$.
We shall be particularly interested in how $(\sigma,k)$-permutiples relate to their corresponding orbits under $G$.

For the remainder of this paper, we shall take $\frac{p_j}{q_j}$ and $\frac{p'_j}{q'_j}$ to be the $j$th 
convergents of $r=[a_0; a_{1}, \ldots, a_n]$ and $r'=[a_{\sigma(0)}; a_{\sigma(1)}, \ldots, a_{\sigma(n)}]$, respectively.
We also take  $\{\gamma_0$, $\gamma_1$, $\gamma_2$, $\ldots\}$ and $\{\gamma'_0$, $\gamma'_1$, $\gamma'_2$, $\ldots\}$
to be the orbits of $\gamma_0=r-\lfloor r \rfloor$ and $\gamma'_0=r'-\lfloor r' \rfloor$ under $G$, respectively.
We shall refer to these sequences as the {\it tails} of $r$ and $r'$, respectively.
Finally, so that finite continued fraction representations are unique, we shall assume that every finite continued
fraction is in canonical form.

Denoting the $n$th continuant as $K_{n}(x_0,x_1,\ldots,x_{n-1})$,
we may restate the problem as finding natural numbers $a_0$, $a_1$, $\ldots$ $a_n$, and a permutation $\sigma$ such that
\begin{equation}
\frac{p_n}{q_n}=\frac{K_{n+1}(a_0,a_1,\ldots,a_n)}{K_{n}(a_1,\ldots,a_n)}
=k\frac{K_{n+1}(a_{\sigma(0)},a_{\sigma(1)},\ldots,a_{\sigma(n)})}{K_{n}(a_{\sigma(1)},\ldots,a_{\sigma(n)})}
=k\frac{p'_n}{q'_n}
\label{continuant}
\end{equation}
for some natural number $k>1$. In the coming pages, the reader shall see that
the next definition will be of central importance to our effort.

\begin{definition} 
 We say that a $(\sigma,k)$-permutiple $[a_0; a_{1}, \ldots, a_n]$ is {\it continuant-preserving} 
 if $$K_{n+1}(a_0,a_1,\ldots,a_n)=K_{n+1}(a_{\sigma(0)},a_{\sigma(1)},\ldots,a_{\sigma(n)}).$$
 \label{contin_pres_def}
\end{definition}

\begin{remark}
 Since the reversal permutation preserves continuant polynomials in general, all $k$-reverse multiples are
continuant-preserving. 
\end{remark}

\begin{theorem}
 If $r=[a_0; a_{1}, \ldots, a_n]$ is a $(\sigma,k)$-permutiple, then the following are equivalent:  
 \begin{enumerate}
  \item $r$ is continuant-preserving,
  \item $K_{n}(a_{\sigma(1)},\ldots,a_{\sigma(n)})=k K_{n}(a_1,\ldots,a_n)$,  
  \item  $\gamma_0 \gamma_1 \cdots \gamma_{n-1}=k \gamma'_0 \gamma'_1 \cdots \gamma'_{n-1}$.
 \end{enumerate}
  \label{contin_pres_equiv}
\end{theorem}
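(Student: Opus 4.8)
The plan is to route everything through the cross-multiplied form of the permutiple relation together with a single telescoping identity for the product of tails. From the displayed equation \eqref{continuant}, the hypothesis that $r$ is a $(\sigma,k)$-permutiple is exactly $\frac{p_n}{q_n}=k\frac{p'_n}{q'_n}$, that is,
\[
p_n q'_n = k\, p'_n q_n,
\]
where $p_n=K_{n+1}(a_0,\ldots,a_n)$, $q_n=K_n(a_1,\ldots,a_n)$, $p'_n=K_{n+1}(a_{\sigma(0)},\ldots,a_{\sigma(n)})$, and $q'_n=K_n(a_{\sigma(1)},\ldots,a_{\sigma(n)})$; all four are positive since the $a_i$ are natural numbers and $r>0$. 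By Definition~\ref{contin_pres_def}, statement (1) is precisely $p_n=p'_n$, while statement (2) is precisely $q'_n=kq_n$. Hence the equivalence (1)$\Leftrightarrow$(2) follows immediately from the cross-multiplied relation: substituting $p_n=p'_n$ and cancelling the nonzero common factor $p_n$ gives $q'_n=kq_n$, and substituting $q'_n=kq_n$ and cancelling $kq_n$ gives $p_n=p'_n$.

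The crux is to express the product of tails in terms of a single continuant. First I would record that the $j$th tail is itself the finite continued fraction $\gamma_j=[0;a_{j+1},\ldots,a_n]$, so that writing it as numerator over denominator in continuant form gives
\[
\gamma_j=\frac{K_{n-j-1}(a_{j+2},\ldots,a_n)}{K_{n-j}(a_{j+1},\ldots,a_n)}.
\]
Setting $D_j=K_{n-j}(a_{j+1},\ldots,a_n)$, the numerator above is exactly $D_{j+1}$, so that $\gamma_j=D_{j+1}/D_j$ and the product telescopes:
\[
\gamma_0\gamma_1\cdots\gamma_{n-1}=\prod_{j=0}^{n-1}\frac{D_{j+1}}{D_j}=\frac{D_n}{D_0}=\frac{1}{q_n},
\]
since $D_n=1$ is the empty continuant and $D_0=K_n(a_1,\ldots,a_n)=q_n$. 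The identical computation applied to $r'$ yields $\gamma'_0\cdots\gamma'_{n-1}=1/q'_n$. I expect the main obstacle to lie precisely in this continuant bookkeeping: keeping the index shifts consistent and verifying the boundary cases $j=n-1$ (where $\gamma_{n-1}=1/a_n$ and $D_n=1$) and $j=0$ (where $D_0=q_n$), which is what makes the telescoping land cleanly on $1/q_n$.

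With these two identities in hand, statement (3), namely $\gamma_0\cdots\gamma_{n-1}=k\,\gamma'_0\cdots\gamma'_{n-1}$, becomes $\frac{1}{q_n}=\frac{k}{q'_n}$, which rearranges to $q'_n=kq_n$ — exactly statement (2). Thus (2)$\Leftrightarrow$(3), and combined with (1)$\Leftrightarrow$(2) from the first paragraph, all three statements are equivalent. An alternative to the telescoping lemma would be to iterate the standard relation $rq_j-p_j=-\gamma_j(rq_{j-1}-p_{j-1})$ down to $\prod_{i=0}^{n-1}\gamma_i=(-1)^{n-1}(rq_{n-1}-p_{n-1})$, but I prefer the direct continuant telescoping because it produces $1/q_n$ with no residual sign or convergent terms, which makes the passage to (2) entirely transparent.
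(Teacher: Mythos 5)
Your proof is correct and structurally the same as the paper's: both read the equivalence $(1)\Leftrightarrow(2)$ directly off Equation \ref{continuant} (item 1 being $p_n=p'_n$ and item 2 being $q'_n=kq_n$), and both reduce $(2)\Leftrightarrow(3)$ to the pivotal identity $\gamma_0\gamma_1\cdots\gamma_{n-1}=1/q_n$ together with its primed analogue. The only difference is how that identity is derived --- the paper proves by induction that $\gamma_0\gamma_1\cdots\gamma_j=\frac{1}{q_j/\gamma_j+q_{j-1}}$ for $j<n$ and then sets $j=n-1$, whereas you write each tail as a ratio of suffix continuants $\gamma_j=D_{j+1}/D_j$ and telescope; this is an equally valid (and arguably cleaner) route to the same fact, relying only on the assumption of canonical form to guarantee $\gamma_j=[0;a_{j+1},\ldots,a_n]$.
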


\begin{proof}
  Equation \ref{continuant} makes obvious the equivalence of items {\it 1} and {\it 2} above. 
  We shall prove $({\it 2})\Longleftrightarrow({\it 3})$.
 By induction we have  $\gamma_0 \gamma_1 \cdots \gamma_j=\frac{1}{q_{j}/\gamma_{j}+q_{j-1}}$ for $j<n$.
 Hence, $\gamma_0 \gamma_1 \cdots \gamma_{n-1}=\frac{1}{a_n q_{n-1}+q_{n-2}}=\frac{1}{q_n}$. 
 Similarly, $\gamma'_0 \gamma'_1 \cdots \gamma'_{n-1}=\frac{1}{q'_n}$. 
\end{proof}

\begin{theorem}
 If $r=[a_0; a_{1}, \ldots, a_n]$ is a $(\sigma,k)$-permutiple such that $\frac{p_n}{p'_n}<2$, then $r$ is continuant-preserving.
  \label{basic}
 \end{theorem}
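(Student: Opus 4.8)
The plan is to translate the continuant-preserving condition into a statement about convergent numerators and then exploit the coprimality of the numerator and denominator of a canonical continued fraction. By Equation~\ref{continuant}, the numerators are exactly $p_n=K_{n+1}(a_0,a_1,\ldots,a_n)$ and $p'_n=K_{n+1}(a_{\sigma(0)},a_{\sigma(1)},\ldots,a_{\sigma(n)})$, so the continuant-preserving condition of Definition~\ref{contin_pres_def} is precisely the equality $p_n=p'_n$. Thus the whole task reduces to showing $p_n=p'_n$ under the hypothesis $\frac{p_n}{p'_n}<2$.

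First I would write the permutiple relation as $\frac{p_n}{q_n}=k\frac{p'_n}{q'_n}$ and clear denominators to get $p_n q'_n = k\,p'_n q_n$. Since $r$ and $r'$ are in canonical form, their convergents satisfy the standard coprimality relations $\gcd(p_n,q_n)=1$ and $\gcd(p'_n,q'_n)=1$, and all of $p_n,p'_n,q_n,q'_n$ are positive. The key step is then a single divisibility extraction: from $p_n q'_n = k\,p'_n q_n$ we see that $p'_n \mid p_n q'_n$, and because $\gcd(p'_n,q'_n)=1$, Euclid's lemma yields $p'_n \mid p_n$. Hence $\frac{p_n}{p'_n}$ is not merely a rational number but a positive integer.

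With that in hand the argument finishes immediately: a positive integer that is strictly less than $2$ must equal $1$, so the hypothesis $\frac{p_n}{p'_n}<2$ forces $p_n=p'_n$, which is the continuant-preserving condition. The hard part, such as it is, is conceptual rather than computational: it is the observation that the coprimality of $p'_n$ and $q'_n$ promotes the ratio $\frac{p_n}{p'_n}$ from a fraction to an integer, after which the bound $<2$ does all the remaining work. No induction and no continuant identities beyond those already recorded in Equation~\ref{continuant} are needed, so I expect the proof to be quite short.
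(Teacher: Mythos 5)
Your proof is correct and follows essentially the same route as the paper: both arguments reduce continuant-preservation to the equality $p_n=p'_n$, extract the divisibility $p'_n \mid p_n$ from the permutiple relation via coprimality of convergent numerators and denominators, and then use the bound $\frac{p_n}{p'_n}<2$ to force the quotient to be $1$. Your write-up merely makes explicit (via cross-multiplication and Euclid's lemma) the divisibility step that the paper states in one line.
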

 \begin{proof}
 Since $\gcd(p_n,q_n)=1$ and $\gcd(p'_n,q'_n)=1$, we have that $p'_n$ divides $p_n$. 
 Hence, since $\frac{p_n}{p'_n}<2$ by assumption, $p'_n=p_n$ so that 
 $K_{n+1}(a_0,a_1,\ldots,a_n)=K_{n+1}(a_{\sigma(0)},a_{\sigma(1)},\ldots,a_{\sigma(n)})$.
\end{proof}
 
 \section{Permutiples of two and three digits}\label{3digs}
 
 We now find all permutiples of 3 digits or less. 
 A trivial calculation shows that the form of any 2-digit permutiple is $[a_0;a_1]=[ks,s]$, where $s$ is an integer
 parameter greater than 1. 
 
 Suppose that $n+1=3$. Since $a_2>1$ by assumption, 
 and since $a_0 > a_{\sigma(0)} \geq 1$ for any $(\sigma,k)$-permutiple, we see that
 $$
 \frac{p_2}{p'_2}
 =\frac{a_0(a_1 a_2+1)+a_2}{a_{\sigma(0)}(a_{\sigma(1)} a_{\sigma(2)}+1)+a_{\sigma(2)}}
 =\frac{1+\frac{1}{a_1 a_2}+\frac{1}{a_0 a_1}}{1+\frac{a_{\sigma(0)}}{a_{0}a_{1}a_{2}}
 +\frac{a_{\sigma(2)}}{a_{0}a_{1}a_{2}}}
 < 1+\frac{1}{2 a_1}+\frac{1}{2 a_1} \leq 2.
 $$
 Hence, $\frac{p_2}{p'_2}<2$ so that by Theorem and \ref{basic}, any 3-digit permutiple is continuant-preserving.
 Thus, 
 \begin{equation}
  a_0+a_2=a_{\sigma(0)}+a_{\sigma(2)}.
  \label{3-digit_sum}
 \end{equation} 
 We now use Equation \ref{3-digit_sum} to narrow down which permutations are possible. Now, $\sigma(0)$ cannot be 0 since
 $a_0 > a_{\sigma(0)}$ for any permutiple. Suppose then that $\sigma(0)=1$. By Equation \ref{3-digit_sum}, we
 have $a_0+a_2=a_{1}+a_{\sigma(2)}$. If $\sigma(2)=2$, then $a_0=a_1$, which is impossible since this would imply that
  $a_0 > a_{\sigma(0)}=a_1=a_0$. Hence, the only possibility left is $\sigma(2)=0$ so that $a_1=a_2$. 
  Under this assumption we have that $r=[a_0;a_1,a_2]=k[a_1;a_2,a_0]=k[a_2;a_1,a_0]$, and thus, 
  in this case, $r$ must be a $k$-reverse multiple. 
  If we assume that $\sigma(0)=2$, then a similar argument demonstrates that our permutiple must again be a $k$-reverse multiple.
  Hence, every 3-digit permutiple is a $k$-reverse multiple.
  
 Now, using Theorem \ref{contin_pres_equiv}, the continued fraction $[a_0;a_1,a_2]$ is a $k$-reverse multiple
 if and only if  $K_{2}(a_{1},a_{0})=k K_{2}(a_1,a_2)$, or  
\begin{equation}
 a_0 a_1+1=k(a_1 a_2 +1). 
\label{3-digit} 
\end{equation}
 Therefore, $a_0 a_1 \equiv -1$ (mod $k$). If we suppose that $k$ divides $a_0 a_1+1$ with quotient $q$,
 then $a_1 a_2 \equiv -1$ (mod $q$). Now, $a_0$ must be greater than $k a_2$. For if we assume that $a_0 \leq k a_2$,
 then by Equation \ref{3-digit}, $k \leq 1$, which is a contradiction of our initial assumption. 
 It then follows by Equation \ref{3-digit} that $a_1 \leq k-1$.
 Finally, since $a_2 a_1+1=q$, we have $a_2<q$.
 
 We now have the following method for finding all 3-digit $k$-reverse multiples:
 choose an $a_0>k$ and relatively prime to $k$. Next, find the multiplicative inverse, $\alpha$, of $a_0$ modulo $k$. 
 Then $a_1=k-\alpha$. Finally, find the multiplicative inverse $0<\beta<q$ of $a_1$ modulo $q$ where $q=\frac{a_0a_1+1}{k}$.
 (A solution exists since $a_1$ and $q$ are relatively prime.) Then $a_2=q-\beta$.
 
 The reader will also notice that this method gives all non-canonical representations as well. 
 For example, for $k=2$, and $a_0=5$, we obtain the non-canonical $2$-reverse multiple $[5;3,1]$.

\section{Perfect permutiples}

For any value of $k>1$, we may construct an infinitude of permutiples with more than 3 digits. We observe that 
\[
 kb_0+\frac{1}{b_{1}+\frac{1}{kb_{2}+\frac{1}{b_3+\frac{1}{\ddots}}}}
=k \left(b_0+\frac{1}{kb_{1}+\frac{1}{b_2+\frac{1}{kb_3+\frac{1}{\ddots}}}}\right)
\]
Thus, setting $a_j=kb_j$ and $a_{\sigma(j)}=b_j$ for even $j$, and $a_j=b_j$ and $a_{\sigma(j)}=kb_j$ for odd $j$, 
we obtain the system $a_j=ka_{\sigma(j)}$ for even $j$, and $k a_{j}=a_{\sigma(j)}$ for odd $j$.
This is a remarkably easy system to solve. As an example, we choose a value of $k$, say $7$, a permutation $\sigma$, 
say $(0,1)(2,3)$, and we find digits $a_0$, $a_1$, $a_2$, and $a_3$ such that $a_0=7a_{1}$, $a_2=7a_{3}$, 
$7 a_{1}=a_{0}$, and $7 a_{3}=a_{2}$. The system simplifies to 
$a_0=7s$, $a_{1}=s$, $a_2=7t$, and $a_{3}=t$, where $s \geq 1$ and $t>1$ are integer parameters.
Then
\[
 a_0+\frac{1}{a_1+\frac{1}{a_{2}+\frac{1}{a_3}}}
= 7s+\frac{1}{s+\frac{1}{7t+\frac{1}{t}}}
=7 \left(s+\frac{1}{7s+\frac{1}{t+\frac{1}{7t}}} \right)
=7 \left(a_1+\frac{1}{a_0+\frac{1}{a_{3}+\frac{1}{a_2}}} \right).
\]
Thus, for example, the reader may verify that $[7,1,14,2]$ is a $((0,1)(2,3),7)$-permutiple.
The above construction motivates the following definition.

\begin{definition}
We say a $(\sigma,k)$-permutiple $[a_0; a_{1}, \ldots, a_n]$ is {\it perfect} provided
\begin{equation}
k=\frac{a_0}{a_{\sigma(0)}}=\frac{a_{\sigma(1)}}{a_1}=\frac{a_2}{a_{\sigma(2)}}=\frac{a_{\sigma(3)}}{a_3}=\cdots.
\label{perfect_eq}
\end{equation} 
\label{perfect_def}
\end{definition}

\begin{remark}
 Every 2-digit permutiple is a perfect reverse multiple.
\end{remark}

As the reader is sure to have noticed when constructing the previous example, 
the choice of the multiplier, $k$, was entirely arbitrary since any suitable value would have illustrated our purpose.
However, the choice of permutation was not arbitrary;
Equation \ref{perfect_eq} puts tight restrictions on the kind of permutation $\sigma$ can be.
The first, and most obvious, is that $\sigma$ must be a {\it derangement}, otherwise, it would be that $k=1$.

We now represent Equation \ref{perfect_eq} in matrix form,
$\mathbf{d}=
\left[
 \begin{matrix}
 k & 0 & 0 & 0 & \cdots\\
 0 & 1/k & 0 & 0 & \cdots\\
 0 & 0 & k & 0 & \cdots\\
 0 & 0 & 0 & 1/k & \cdots\\
 \vdots & \vdots & \vdots & \vdots & \ddots\\
\end{matrix}
\right]
P_{\sigma}\textbf{d},$ where $P_{\sigma}$ is the permutation matrix of $\sigma$, 
and  
$\mathbf{d}
=\left[
\begin{matrix}
 a_0\\
 a_1\\
 \vdots\\
 a_n
\end{matrix}
\right]$. (Note that all of these matrices are indexed from 0 to $n$). 
Now, we see that in the case of even $j$ that
$a_{j}$ must be divisible by $k$ with quotient $s_j$, which we will interpret as a free integer parameter in the solution
as we did in the example at the beginning of this section.
Then we have 
$\textbf{d}=
\left[
 \begin{matrix}
 ks_0 \\
 s_{1}  \\
 ks_{2} \\
 s_{3}\\
 \vdots\\
\end{matrix}
\right]
=
\left[
 \begin{matrix}
 k & 0 & 0 & 0 & \cdots\\
 0 & 1 & 0 & 0 & \cdots\\
 0 & 0 & k & 0 & \cdots\\
 0 & 0 & 0 & 1 & \cdots\\
 \vdots & \vdots & \vdots & \vdots & \ddots\\
\end{matrix}
\right]
\left[
 \begin{matrix}
 s_0 \\
 s_{1}  \\
 s_{2} \\
 s_{3}\\
 \vdots\\
\end{matrix}
\right].
$ 
After substitution and simplification, the above system becomes
$
\left[
 \begin{matrix}
 s_0 \\
 s_{1}  \\
 s_{2} \\
 s_{3}\\
 \vdots\\
\end{matrix}
\right]
=
\left[
 \begin{matrix}
 1 & 0 & 0 & 0 & \cdots\\
 0 & 1/k & 0 & 0 & \cdots\\
 0 & 0 & 1 & 0 & \cdots\\
 0 & 0 & 0 & 1/k & \cdots\\
 \vdots & \vdots & \vdots & \vdots & \ddots\\
\end{matrix}
\right]
P_{\sigma}
\left[
 \begin{matrix}
 k & 0 & 0 & 0 & \cdots\\
 0 & 1 & 0 & 0 & \cdots\\
 0 & 0 & k & 0 & \cdots\\
 0 & 0 & 0 & 1 & \cdots\\
 \vdots & \vdots & \vdots & \vdots & \ddots\\
\end{matrix}
\right]
\left[
 \begin{matrix}
 s_0 \\
 s_{1}  \\
 s_{2} \\
 s_{3}\\
 \vdots\\
\end{matrix}
\right]$.
Using the definition of matrix multiplication, we obtain the component form of the above,
\begin{equation}
s_j=k^{-\frac{1}{2}[(-1)^j+(-1)^{\sigma(j)}]}s_{\sigma(j)}.
\label{parameters}
\end{equation}

Equation \ref{parameters} and an induction argument give us
$s_j=k^{-\frac{1}{2}\sum_{\ell=0}^{m-1}[(-1)^{\sigma^{\ell}(j)}+(-1)^{\sigma^{\ell+1}(j)}]}s_{\sigma^{m}(j)}$
for any $m \geq 1$ and for all $0 \leq j \leq k$. Thus, when $m=|\sigma|$,
\begin{equation}
\sum_{\ell=0}^{|\sigma|-1}(-1)^{\sigma^{\ell}(j)}=0.
\label{orbits} 
\end{equation}
An immediate consequence of Equation \ref{orbits} is that $|\sigma|$ must be even. 
Moreover, by letting  $\langle \sigma \rangle$ act on $S=\{0,1, 2,\cdots, n\}$ in the usual
way, the Orbit-Stabilizer theorem tells us that the size of every orbit equals $|\sigma|$ since
$\sigma$ is a derangement.
Thus, every orbit contains an even number of elements.
Moreover, since the orbits partition $S$, we have that $n+1$ must also be even and divisible by $|\sigma|$ where 
the quotient is equal to the number of orbit classes. 
Our results about allowable permutations can then be summarized by the following.   
\begin{theorem}
 If $[a_0; a_{1}, \ldots, a_n]$ is a perfect $(\sigma,k)$-permutiple, then the following must hold:
 \begin{enumerate}
  \item The permutation $\sigma$ is a derangement of even order.
  \item The order of $\sigma$ divides $n+1$ with quotient equal to the number of orbit classes.
  \item Any orbit of $\sigma$ contains $|\sigma|$ elements, half of which are odd, and half of which are even.
 \end{enumerate}
 \label{derangement}
\end{theorem}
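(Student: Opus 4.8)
The plan is to extract all three assertions from the consistency relations already established for the free parameters---Equations \ref{parameters} and \ref{orbits}---together with the standard action of the cyclic group $\langle \sigma \rangle$ on the index set $S = \{0, 1, \ldots, n\}$. The analytic heavy lifting, namely the derivation of those relations, is already behind us, so what remains is an exercise in elementary group theory and a parity count.

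I would first settle item \emph{1}. That $\sigma$ must be a derangement is immediate: if $\sigma(j) = j$ for some $j$, then the corresponding ratio in Equation \ref{perfect_eq} degenerates to $a_j / a_j = 1$, forcing $k = 1$ and contradicting the hypothesis $k > 1$. For the parity of the order, I would read Equation \ref{orbits} as expressing $0$ as a sum of exactly $|\sigma|$ terms, each equal to $\pm 1$. Such a signed sum can vanish only when the $+1$'s and the $-1$'s occur in equal numbers, so the number of terms, $|\sigma|$, must be even.

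To obtain items \emph{2} and \emph{3} I would decompose $S$ into the orbits of $\langle \sigma \rangle$. Invoking the Orbit--Stabilizer theorem (together with the triviality of each stabilizer, discussed below), every orbit has size $|\sigma|$; since the orbits are disjoint and exhaust $S$, summing their cardinalities gives $n+1 = |\sigma| \cdot c$ with $c$ the number of orbit classes, which is precisely item \emph{2}. For item \emph{3}, I would restrict Equation \ref{orbits} to a single orbit $\{ j, \sigma(j), \ldots, \sigma^{|\sigma|-1}(j) \}$: as $\ell$ runs over $0, \ldots, |\sigma|-1$ the value $\sigma^{\ell}(j)$ visits each element of the orbit exactly once, so the vanishing of the signed sum says exactly that the orbit carries as many even indices (each contributing $+1$) as odd ones (each contributing $-1$)---half of each.

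The step I expect to require the most care is the claim that every orbit has length exactly $|\sigma|$, equivalently that the stabilizer of each index in $\langle \sigma \rangle$ is trivial. Being a derangement only guarantees that $\sigma^{1}$ fixes no index; a priori some proper power $\sigma^{\ell}$ with $1 < \ell < |\sigma|$ could stabilize an index, which would shorten an orbit and complicate the clean count underlying item \emph{3}. Pinning this down---rather than the routine parity bookkeeping, for which one need only observe that the exponent $-\tfrac{1}{2}[(-1)^{j} + (-1)^{\sigma(j)}]$ appearing in Equation \ref{parameters} always equals one of $-1$, $0$, or $1$ and is therefore an integer, so the telescoping around a full cycle indeed collapses to the integer identity \ref{orbits}---is where I would concentrate the real effort.
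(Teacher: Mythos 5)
You have correctly isolated the crux, and your caution about it is more warranted than you may realize: the step you deferred---that every orbit of $\langle\sigma\rangle$ has size exactly $|\sigma|$---is not merely the hard part, it is unprovable, because it is false, and with it items \emph{2} and \emph{3} of the theorem as stated. What Equations \ref{parameters} and \ref{orbits} actually give, when the telescoping is carried out over one period of the orbit of $j$ (i.e., taking $m$ equal to the least period of $j$ rather than $|\sigma|$), is that the signed sum of $(-1)^{i}$ over the distinct elements $i$ of that orbit vanishes. Hence every orbit has even length with equally many even and odd indices, $n+1$ (the sum of the orbit lengths) is even, and $|\sigma|$ (their least common multiple) is even---which validates your item \emph{1} argument. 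But nothing forces the orbit lengths to all equal $|\sigma|$: as you note, a derangement only rules out fixed points of $\sigma$ itself, not of its proper powers, so the stabilizers need not be trivial.

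A concrete counterexample: take $\sigma=(0,1)(2,3,4,5)$, $k=2$, and $[a_0;a_1,\ldots,a_5]=[2;1,4,2,4,2]$. Then $[a_{\sigma(0)};a_{\sigma(1)},\ldots,a_{\sigma(5)}]=[1;2,2,4,2,4]$, and one computes $[2;1,4,2,4,2]=\frac{307}{109}=2\cdot\frac{307}{218}=2\cdot[1;2,2,4,2,4]$, while all six ratios in Equation \ref{perfect_eq} equal $2$; so this is a perfect $(\sigma,2)$-permutiple in canonical form. Yet $|\sigma|=4$ does not divide $n+1=6$, and the orbit $\{0,1\}$ has $2\neq|\sigma|$ elements. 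For comparison, the paper's own proof fills your gap with exactly the fallacy you flagged: it asserts that ``the Orbit-Stabilizer theorem tells us that the size of every orbit equals $|\sigma|$ since $\sigma$ is a derangement,'' a non sequitur, since Orbit--Stabilizer gives orbit size $|\sigma|/|\mathrm{Stab}(j)|$ and being a derangement does not make stabilizers trivial. So rather than concentrating effort on closing the gap, the correct move is to weaken the conclusions to what Equation \ref{orbits} supports: $\sigma$ is a derangement of even order, every orbit has even size with half even and half odd indices, and consequently $n+1$ is even (which is all that later results, such as Corollary \ref{perf_even}, actually need).
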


The reversal permutation on an even number of symbols satisfies all three conclusions of Theorem \ref{derangement},
but not all reverse multiples with an even number of digits are perfect. 
Therefore, the converse of Theorem \ref{derangement} does not hold in general;
the 2-reverse multiple, $[7; 2, 1, 3]$, provides a counterexample.

Our next theorem characterizes perfect permutiples in terms of their tails.

\begin{theorem}
 Let $[a_0; a_{1}, \ldots, a_n]$ be a $(\sigma,k)$-permutiple. Then the following are equivalent:
 \begin{enumerate}
  \item $[a_0; a_{1}, \ldots, a_n]$ is perfect,
  \item $k=\frac{\gamma_{0}}{\gamma'_{0}}=\frac{\gamma'_{1}}{\gamma_{1}}=\frac{\gamma_{2}}{\gamma'_{2}}=\frac{\gamma'_{3}}{\gamma_{3}}=\cdots$,
  \item $\frac{1}{k}G(x)=G(kx)$ when  $x=\gamma'_0$, $\gamma_1$, $\gamma'_2$, $\gamma_3, \ldots$, and $G(\frac{1}{k}x)=kG(x)$ when  $x=\gamma_0$, $\gamma'_1$, $\gamma_2$, $\gamma'_3, \ldots$.
 \end{enumerate}
 \label{perf_equiv}
\end{theorem}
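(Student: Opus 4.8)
The plan is to make statement (2) the hub: I would prove (1) $\Leftrightarrow$ (2) and (2) $\Leftrightarrow$ (3) separately. Every computation rests on two elementary facts about tails, namely the shift identity $G(\gamma_j)=\gamma_{j+1}$ (with its primed analogue $G(\gamma'_j)=\gamma'_{j+1}$) and the reciprocal recursion $\tfrac{1}{\gamma_j}=a_{j+1}+\gamma_{j+1}$ coming from $\gamma_j=[0;a_{j+1},\ldots,a_n]$, together with the single global relation $r=kr'$ that links the leading digits $a_0$ and $a_{\sigma(0)}$.

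For (1) $\Rightarrow$ (2) I would induct downward on $j$, the base case $j=n$ being trivial since $\gamma_n=\gamma'_n=0$. At an even $j$ the perfect relation at the odd index $j+1$ reads $a_{\sigma(j+1)}=ka_{j+1}$; feeding this and the inductive hypothesis $\gamma'_{j+1}=k\gamma_{j+1}$ into $\gamma'_j=\tfrac{1}{a_{\sigma(j+1)}+\gamma'_{j+1}}$ gives $\gamma'_j=\tfrac{1}{k}\gamma_j$, which is the desired relation; the odd case is symmetric. The converse (2) $\Rightarrow$ (1) needs no induction: for even $j$ the recursion turns $\gamma_j=k\gamma'_j$ into $a_{j+1}+\gamma_{j+1}=\tfrac{1}{k}(a_{\sigma(j+1)}+\gamma'_{j+1})$, and substituting $\gamma'_{j+1}=k\gamma_{j+1}$ isolates $a_{\sigma(j+1)}=ka_{j+1}$, while the leading relation $a_0=ka_{\sigma(0)}$ falls out of $\gamma_0=k\gamma'_0$ and $r=kr'$.

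The implication (2) $\Rightarrow$ (3) is pure substitution. Assuming (2), at an even $j$ the equality $k\gamma'_j=\gamma_j$ rewrites the argument of the first functional equation so that $G(k\gamma'_j)=G(\gamma_j)=\gamma_{j+1}$, while $\tfrac{1}{k}G(\gamma'_j)=\tfrac{1}{k}\gamma'_{j+1}=\gamma_{j+1}$ by (2) at the next index; the two sides coincide. Each of the four parity-and-equation cases collapses in the same way to a single instance of (2).

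The genuine difficulty is (3) $\Rightarrow$ (2). Written out via $G(x)=\tfrac{1}{x}-\lfloor\tfrac{1}{x}\rfloor$, a single functional equation is strictly weaker than a scaling relation: for even $j$, at $x=\gamma'_j$ the identity $\tfrac{1}{k}G(x)=G(kx)$ reduces only to the divisibility $k\mid a_{\sigma(j+1)}$, and at $x=\gamma_j$ the identity $G(\tfrac{1}{k}x)=kG(x)$ reduces only to the bound $\gamma_{j+1}<\tfrac{1}{k}$, neither of which fixes the ratio $\gamma_j/\gamma'_j$. To promote these divisibilities and bounds to the term-by-term scaling of (2) I would invoke the global relation $r=kr'$, which at the top reads $\gamma_0=k\gamma'_0-\lfloor k\gamma'_0\rfloor$ and $a_0=ka_{\sigma(0)}+\lfloor k\gamma'_0\rfloor$; thus (2) at $j=0$ is exactly the assertion that the ``carry'' $\lfloor k\gamma'_0\rfloor$ vanishes. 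I expect the crux to be an induction showing that the divisibility data handed down by (3) is incompatible with any nonzero carry once $r=kr'$ is imposed, so that the fraction propagates carry-free and the exact scaling of (2) emerges at every index. Ruling out these carries is the step I anticipate will require the most care.
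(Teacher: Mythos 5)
Your directions $(1)\Rightarrow(2)$, $(2)\Rightarrow(1)$, and $(2)\Rightarrow(3)$ are complete and correct; indeed your downward induction for $(1)\Rightarrow(2)$, anchored at the trivial base case $\gamma_n=\gamma'_n=0$, is a clean alternative to the paper's forward induction, which instead starts from $\gamma_0=k\gamma'_0$ (obtained from $r=kr'$ together with $a_0=ka_{\sigma(0)}$). The genuine gap is exactly where you flagged it: $(3)\Rightarrow(2)$ is never proved. Your diagnosis of the difficulty is accurate --- at $x=\gamma'_j$ the identity $\frac{1}{k}G(x)=G(kx)$ unwinds to the divisibility $k\mid a_{\sigma(j+1)}$, and at $x=\gamma_j$ the identity $G(\frac{1}{k}x)=kG(x)$ unwinds to $\lfloor k\gamma_{j+1}\rfloor=0$, and these constrain each orbit separately without ever coupling $\gamma_j$ to $\gamma'_j$ --- but what follows the diagnosis is a hope, not an argument. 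You have not shown that a nonzero carry $\lfloor k\gamma'_0\rfloor$ is incompatible with the divisibility data under $r=kr'$, and it is not at all evident how such an induction would run (even the $4$-digit reverse-multiple case requires a nontrivial inequality argument). As written, the equivalence of item $(3)$ with the other two items is unestablished in your proposal.

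It is worth knowing how the paper handles this, because the comparison is instructive. The paper closes a cycle $(1)\Rightarrow(2)\Rightarrow(3)\Rightarrow(1)$, and its proof of $(3)\Rightarrow(1)$ is one line: from $kG(\gamma_j)=G(\gamma_j/k)$ it deduces $k\lfloor 1/\gamma_j\rfloor=\lfloor k/\gamma_j\rfloor$ and then writes $\lfloor k/\gamma_j\rfloor=\lfloor 1/\gamma'_j\rfloor$, concluding $ka_{j+1}=a_{\sigma(j+1)}$. That middle equality is precisely the identification $\gamma_j=k\gamma'_j$, i.e., item $(2)$ --- the very thing your strictly pointwise reading of $(3)$ refuses to assume. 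In other words, the paper implicitly reads item $(3)$ as functional equations holding \emph{along the identified orbits} (the point $kx$ at $x=\gamma'_j$ is taken to be $\gamma_j$), under which reading your ``hard'' direction collapses to the same routine floor-function manipulation as the others; under your literal reading, the paper's own proof has exactly the hole you could not fill. So you should either adopt the orbit-identified reading of $(3)$ and finish in a few lines in the paper's style, or recognize that on the literal reading the implication $(3)\Rightarrow(2)$ remains open --- both in your proposal and, in effect, in the paper.
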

\begin{proof} 
Let $r=[a_0; a_{1}, \ldots, a_n]$ and $r'=[a_{\sigma(0)}; a_{\sigma(1)}, \ldots, a_{\sigma(n)}]$.
We establish $({\it 1})\Longrightarrow({\it 2})$ by induction. The first tails are $\gamma_0=r-\lfloor r \rfloor$ and 
$\gamma'_0=r'-\lfloor r' \rfloor$, and since $a_0=ka_{\sigma(0)}$ and $r=kr'$ by assumption, we have $\gamma_0=k\gamma'_0$.
Hence, 
$
k\gamma_1
=\frac{k}{\gamma_0}-k \lfloor \frac{1}{\gamma_0} \rfloor
=\frac{1}{\gamma'_0}-k a_1
=\frac{1}{\gamma'_0}-a_{\sigma(1)}
=\frac{1}{\gamma'_0}-\lfloor \frac{1}{\gamma'_0} \rfloor
=\gamma'_1.
$
Now suppose that $j$ is even and that both $\gamma_j=k\gamma'_j$ and $k \gamma_{j+1}=\gamma'_{j+1}$.
Then 
$
\gamma_{j+2}
=\frac{1}{\gamma_{j+1}}-\lfloor \frac{1}{\gamma_{j+1}} \rfloor
=\frac{1}{\gamma_{j+1}}-a_{j+2}
=\frac{k}{\gamma'_{j+1}}-ka_{\sigma(j+2)}
=\frac{k}{\gamma'_{j+1}}-k\lfloor \frac{1}{\gamma'_{j+1}} \rfloor
=k \gamma'_{j+2}
$
so that
$
k \gamma_{j+3}
=  \frac{k}{\gamma_{j+2}}- k \lfloor \frac{1}{\gamma_{j+2}} \rfloor
= \frac{k}{\gamma_{j+2}}-ka_{j+2}
= \frac{k}{\gamma_{j+2}}-a_{\sigma(j+2)}
= \frac{1}{\gamma'_{j+2}}-\lfloor \frac{1}{\gamma'_{j+2}} \rfloor
= \gamma'_{j+3}.
$

For $({\it 2})\Longrightarrow({\it 3})$, we suppose 
$k=\frac{\gamma_{0}}{\gamma'_{0}}=\frac{\gamma'_{1}}{\gamma_{1}}=\frac{\gamma_{2}}{\gamma'_{2}}=\frac{\gamma'_{3}}{\gamma_{3}}=\cdots$.
Then for even $j$ we have $k \gamma_{j+1}=\gamma'_{j+1}$.
Consequently, 
$
kG(\gamma_j)
=\frac{k}{\gamma_{j}}-k \lfloor \frac{1}{\gamma_{j}} \rfloor
=\frac{1}{\gamma'_{j}}-\lfloor \frac{1}{\gamma'_{j}} \rfloor
=\frac{k}{\gamma_{j}}-\lfloor \frac{k}{\gamma_{j}} \rfloor
=G(\gamma_j/k)
$
which implies 
$kG(k\gamma'_j)
=kG(\gamma_j)
=G(\gamma_j/k)
=G(\gamma'_j)
$ for even $j$.
Similarly, $kG(k\gamma_j)
=kG(\gamma'_j)
=G(\gamma'_j/k)
=G(\gamma_j)
$ when $j$ is odd.

To prove $({\it 3})\Longrightarrow({\it 1})$, we have for even $j$ that $kG(\gamma_j)=G(\gamma_j/k)$. Hence, 
$\frac{k}{\gamma_{j}}-k \lfloor \frac{1}{\gamma_{j}} \rfloor=\frac{k}{\gamma_{j}}-\lfloor \frac{k}{\gamma_{j}} \rfloor$, 
or $k \lfloor \frac{1}{\gamma_{j}} \rfloor=\lfloor \frac{k}{\gamma_{j}} \rfloor=\lfloor \frac{1}{\gamma'_{j}} \rfloor$.
Thus, $ka_{j+1}=a_{\sigma(j+1)}$. A similar argument establishes that $k=\frac{a_j}{a_{\sigma(j)}}$ for all even $j$. 
\end{proof}

\begin{corollary}
Any perfect $(\sigma,k)$-permutiple, $[a_0; a_1, \ldots, a_n]$, must have an even number of digits.
Moreover, every perfect permutiple is continuant-preserving. 
\label{perf_even}
\end{corollary}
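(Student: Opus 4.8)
The plan is to treat the two assertions separately, drawing on the structural results already in hand. For the claim that a perfect permutiple has an even number of digits, I would appeal directly to Theorem \ref{derangement}. Its third conclusion guarantees that every orbit of $\langle \sigma \rangle$ acting on $S = \{0, 1, \ldots, n\}$ contains exactly $|\sigma|$ elements, while its first conclusion guarantees that $|\sigma|$ is even. Since the orbits partition $S$, the cardinality $n+1$ of $S$ is a sum of orbit sizes, each of which is even; hence $n+1$ is even, which is precisely the statement that the permutiple has an even number of digits.

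For the second assertion, I would invoke the tail characterization of Theorem \ref{perf_equiv}. Item 2 of that theorem supplies, for a perfect permutiple, the relations $\gamma_j = k\gamma'_j$ for every even $j$ and $\gamma_j = \frac{1}{k}\gamma'_j$ for every odd $j$. The idea is then to evaluate the product $\gamma_0 \gamma_1 \cdots \gamma_{n-1}$ and compare it against $\gamma'_0 \gamma'_1 \cdots \gamma'_{n-1}$, because by the equivalence of items {\it 1} and {\it 3} in Theorem \ref{contin_pres_equiv}, continuant preservation is exactly the identity $\gamma_0 \cdots \gamma_{n-1} = k\,\gamma'_0 \cdots \gamma'_{n-1}$.

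Substituting the relations above, each even index contributes a factor of $k$ and each odd index a factor of $\frac{1}{k}$, so the product equals $k^{E-O}\,\gamma'_0 \cdots \gamma'_{n-1}$, where $E$ and $O$ count the even and odd indices among $0, 1, \ldots, n-1$. Here the first assertion does the decisive work: since $n+1$ is even, $n$ is odd and $n-1$ is even, so the indices $0, 1, \ldots, n-1$ contain exactly one more even value than odd value, giving $E - O = 1$. Thus $\gamma_0 \cdots \gamma_{n-1} = k\,\gamma'_0 \cdots \gamma'_{n-1}$, and Theorem \ref{contin_pres_equiv} delivers continuant preservation.

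The step I expect to require the most care is the parity bookkeeping in the exponent of $k$: one must confirm that $E - O = 1$ and not merely some other constant, and this is exactly where the evenness of the digit count established in the first part becomes indispensable. Everything else reduces to a direct substitution into the product formula already derived in the proof of Theorem \ref{contin_pres_equiv}.
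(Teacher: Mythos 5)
Your proof is correct and follows essentially the same route as the paper: Theorem \ref{derangement} gives the even digit count, and then the tail relations of Theorem \ref{perf_equiv} combined with the product criterion of Theorem \ref{contin_pres_equiv} yield continuant preservation. Your explicit bookkeeping showing the exponent $E-O=1$ is simply a fleshed-out version of the step the paper compresses into ``$n-1$ is even, so that by Theorem \ref{perf_equiv}, $\gamma_0\gamma_1\cdots\gamma_{n-1}=k\gamma'_0\gamma'_1\cdots\gamma'_{n-1}$.''
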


\begin{proof}
 Since $|\sigma|$ is even and divides $n+1$ by Theorem \ref{derangement}, the first statement holds. 
 Thus, $n-1$ is even, so that by Theorem \ref{perf_equiv}, 
 $\gamma_0 \gamma_1 \cdots \gamma_{n-1}=k \gamma'_0 \gamma'_1 \cdots \gamma'_{n-1}$.
 Theorem \ref{contin_pres_equiv} establishes the result. 
 \end{proof}
 
\subsection{Perfect reverse multiples}
We may now find all perfect $k$-reverse multiples.
By Corollary \ref{perf_even}, $n+1$ is even. Thus, Equation \ref{parameters} gives us the relation
$s_j=k^{-\frac{1}{2}[(-1)^j+(-1)^{\sigma(j)}]}s_{\sigma(j)}
=k^{-\frac{1}{2}[(-1)^j+(-1)^{n-j}]}s_{n-j}=s_{n-j}.$
Thus, any perfect $k$-reverse multiple must have the form
$$[ks_0, s_1, ks_2, s_3, \ldots, k^{\frac{1}{2}[1+(-1)^{(n+1)/2}]} s_{(n+1)/2},
k^{-\frac{1}{2}[-1+(-1)^{(n+1)/2}]}s_{(n+1)/2},\ldots,ks_3, s_2, ks_1,s_0].$$

\subsection{Perfect cyclic permutiples}
Letting $\psi$ be the $(n+1)$-cycle $(0,1,2,\cdots,n)$, determining cyclic permutiples which are perfect amounts to solving
$s_j=k^{-\frac{1}{2}[(-1)^j+(-1)^{\psi^{\ell}(j)}]}s_{\psi^{\ell}(j)}$. 
When $\ell$ is odd, this is easy since $s_j=s_{\psi^{\ell}(j)}$.
Suppose then that $\ell$ is even. Then, since $n+1$ is even,
$\psi^{\ell r}(j)$ is either strictly even or strictly odd for all  $0 \leq j \leq n$ and $r \geq 0$.
Therefore, Equation \ref{orbits} cannot be satisfied,
and so there are no perfect cyclic permutiples for which $\ell$ is even.
Thus, we have found all cyclic permutiples which are perfect. 

\begin{example}
 Every perfect, cyclic, 6-digit $(\psi^3,k)$-permutiple has the form $[ks_0; s_1, ks_2,s_0,ks_1,s_2]$.
\end{example}

\section{Symmetric permutiples}
Aside from being continuant-preserving, all of the $(\sigma,k)$-permutiples we have considered so far 
have another property in common: symmetric products of digits are preserved by $\sigma$. 
For example, the perfect permutiple
$$[a_0;a_1,a_2,a_3,a_4,a_5]
=[3; 1, 9, 1, 3, 3]
=3 \cdot [1; 3, 3, 3, 1, 9]
=3 \cdot [a_{\sigma(0)};a_{\sigma(1)},a_{\sigma(2)},a_{\sigma(3)},a_{\sigma(4)}]$$
satisfies
$a_0a_5=9=a_{\sigma(0)}a_{\sigma(5)}$, $a_1a_4=3=a_{\sigma(1)}a_{\sigma(4)}$, and $a_2a_3=9=a_{\sigma(2)}a_{\sigma(3)}$.
Examples such as these are the motivation for the next definition.

\begin{definition}
A $(\sigma,k)$-permutiple, $[a_0; a_{1}, \ldots, a_n]$, is {\it symmetric} provided
$a_j a_{n-j}=a_{\sigma(j)} a_{\sigma(n-j)}$ for all $0 \leq j \leq n$.
\label{symmetric_definition}
\end{definition}

Clearly, all reverse multiples are symmetric.
Also, since any perfect permutiple must have an even number of digits by Corollary \ref{perf_even},
we may write
$$k=\frac{a_0}{a_{\sigma(0)}}=\frac{a_{\sigma(1)}}{a_1}
=\frac{a_2}{a_{\sigma(2)}}=\frac{a_{\sigma(3)}}{a_3}
=\cdots
=\frac{a_{n-3}}{a_{\sigma(n-3)}}=\frac{a_{\sigma(n-2)}}{a_{n-2}}
=\frac{a_{n-1}}{a_{\sigma(n-1)}}=\frac{a_{\sigma(n)}}{a_n}.
$$
Hence, all perfect permutiples are also symmetric. We state these observations formally.

\begin{theorem}
 All reverse multiples and perfect permutiples are symmetric.
 \label{perf_rev_implies_sym}
\end{theorem}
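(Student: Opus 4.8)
The plan is to handle the two families separately, since each follows almost immediately once the indexing is set up correctly. For reverse multiples, $\sigma$ is the reversal permutation, so $\sigma(j)=n-j$ for every $j$. Then the symmetry requirement $a_j a_{n-j}=a_{\sigma(j)}a_{\sigma(n-j)}$ reads $a_j a_{n-j}=a_{n-j}a_{j}$, which holds by commutativity of multiplication. So this half requires no real work beyond substituting the definition of $\sigma$.

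The substance is the perfect case, and the crux is a parity observation. First I would invoke Corollary \ref{perf_even} to conclude that a perfect permutiple has an even number of digits, i.e.\ $n+1$ is even, and therefore $n$ is odd. The point of this is that for each index $j$, the two indices $j$ and $n-j$ have \emph{opposite} parity: if $j$ is even then $n-j$ is odd, and conversely. This is exactly the feature that makes the defining relations of perfectness pair up correctly.

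Next I would extract from Definition \ref{perfect_def} (Equation \ref{perfect_eq}) the two forms of the defining identity: for even indices $j$ we have $a_j=k\,a_{\sigma(j)}$, while for odd indices $j$ we have $a_{\sigma(j)}=k\,a_j$, equivalently $a_j=\tfrac{1}{k}a_{\sigma(j)}$. Fixing $j$ and using the parity observation, one of $j,\,n-j$ is even and the other odd, so multiplying the appropriate two instances gives
\[
a_j a_{n-j}=\bigl(k\,a_{\sigma(j)}\bigr)\Bigl(\tfrac{1}{k}\,a_{\sigma(n-j)}\Bigr)=a_{\sigma(j)}a_{\sigma(n-j)},
\]
where the factors of $k$ and $\tfrac{1}{k}$ cancel precisely because the two indices carry opposite parities. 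This is the required symmetry identity, and it holds for every $0\le j\le n$ (the case where $j$ is odd and $n-j$ even is identical with the roles of the two factors swapped).

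I do not expect a genuine obstacle here; the proof is essentially bookkeeping. The only thing to be careful about is the index-parity correspondence, and in particular making sure that $n$ being odd is what guarantees the cancellation of $k$ against $1/k$. If $n$ were even, then $j$ and $n-j$ would share parity and the product would pick up a net factor of $k^{\pm 2}$ rather than $1$, so the appeal to Corollary \ref{perf_even} is not cosmetic but is exactly what makes the argument go through. The display in the paragraph immediately preceding the theorem statement, which rewrites Equation \ref{perfect_eq} so that its terms are read inward from both ends, already arranges the relations in the matched pairs this argument uses, so the write-up can simply point to that arrangement and then perform the one-line cancellation.
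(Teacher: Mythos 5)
Your proposal is correct and follows the paper's argument exactly: reverse multiples are symmetric by commutativity, and for perfect permutiples the paper likewise invokes Corollary \ref{perf_even} to get $n+1$ even and then pairs the ratio $k=a_j/a_{\sigma(j)}$ (even $j$) with $k=a_{\sigma(n-j)}/a_{n-j}$ (odd $n-j$), which is precisely your cancellation of $k$ against $1/k$. The only difference is presentational — the paper encodes the pairing in its displayed chain of equalities rather than writing out the one-line product.
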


However, generally speaking, symmetric permutiples are richer than only reverse multiples and perfect permutiples;
the permutiple $[4; 2, 1, 8, 1, 2] = 3 \cdot [1; 2, 4, 2, 1, 8]$
provides us with an example of a symmetric permutiple which is neither a reverse multiple, nor perfect.

We also note that there are an abundance of examples which are not symmetric 
(and therefore neither a reverse multiple nor perfect). 
The permutiple $[9; 3, 2, 8, 2]= 4 \cdot [2; 3, 9, 2, 8]$ provides us with an example which is not symmetric.

\section{Permutiples of four digits}\label{4digs}

Using the machinery we have developed thus far, we now consider the 4-digit case.

\begin{theorem}
Every symmetric 4-digit permutiple is continuant-preserving.
\label{sym_contin_pres}
\end{theorem}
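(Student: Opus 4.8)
The plan is to reduce the claim to Theorem~\ref{basic} by showing that, for a symmetric $4$-digit permutiple, the ratio $\frac{p_3}{p'_3}$ is strictly less than $2$. Here $n=3$, so I would first record the explicit fourth continuant by running the convergent recursion, giving
\begin{equation}
p_3=K_{4}(a_0,a_1,a_2,a_3)=a_0a_1a_2a_3+a_0a_1+a_0a_3+a_2a_3+1,
\label{k4expand}
\end{equation}
together with the analogous expression for $p'_3=K_{4}(a_{\sigma(0)},a_{\sigma(1)},a_{\sigma(2)},a_{\sigma(3)})$ obtained by replacing each $a_i$ with $a_{\sigma(i)}$. The leading term $a_0a_1a_2a_3$ is permutation-invariant, so it contributes identically to $p_3$ and $p'_3$.

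Next I would bring in symmetry. Taking $j=0$ in Definition~\ref{symmetric_definition} yields $a_0a_3=a_{\sigma(0)}a_{\sigma(3)}$, which is exactly the ``middle'' degree-two term of \eqref{k4expand} and its image in $p'_3$; hence this term, too, cancels in the comparison. Writing $C=a_0a_1a_2a_3+a_0a_3+1$ for the resulting common part, together with $X=a_0a_1+a_2a_3$ and $Y=a_{\sigma(0)}a_{\sigma(1)}+a_{\sigma(2)}a_{\sigma(3)}$, I would record $p_3=C+X$ and $p'_3=C+Y$. Since every digit is at least $1$ we have $Y\ge 0$, so $\frac{p_3}{p'_3}=\frac{C+X}{C+Y}\le\frac{C+X}{C}=1+\frac{X}{C}$, and the whole matter is reduced to the single inequality $X<C$.

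The inequality $X<C$ I would establish by the factorization
\[
C-X=a_0a_1a_2a_3-a_0a_1-a_2a_3+1+a_0a_3=(a_0a_1-1)(a_2a_3-1)+a_0a_3\ge a_0a_3>0,
\]
valid because each $a_i\ge 1$. This gives $\frac{p_3}{p'_3}<2$, whereupon Theorem~\ref{basic} immediately forces continuant-preservation. I expect the main obstacle to be the bookkeeping of the continuant expansion: specifically, recognizing that the outer symmetry relation $a_0a_3=a_{\sigma(0)}a_{\sigma(3)}$ is precisely what neutralizes the middle deletion term of $K_4$, so that the surviving cross-terms $X$ and $Y$ can be dominated crudely rather than matched exactly. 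Notably, the argument uses only the $j=0$ symmetry condition, and once the bound $\frac{p_3}{p'_3}<2$ is in hand the conclusion is immediate; so the real work is entirely in setting up \eqref{k4expand} and spotting the factorization.
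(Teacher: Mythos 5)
Your proof is correct, and it shares the paper's skeleton---expand the fourth continuant, exploit the permutation-invariance of $a_0a_1a_2a_3$ together with a single symmetry relation to cancel terms, deduce $\frac{p_3}{p'_3}<2$, and finish with Theorem~\ref{basic}---but the execution differs in two genuine ways. The paper cancels the \emph{inner} cross term via the $j=1$ relation $a_1a_2=a_{\sigma(1)}a_{\sigma(2)}$ (working with $K_4$ divided by the digit product), and then needs the bounds $a_0\ge 2$ (from $a_0>a_{\sigma(0)}$) and $a_3\ge 2$ (from canonicity of the representation) to push the remaining ratio below $2$. You instead cancel the \emph{outer} term via the $j=0$ relation $a_0a_3=a_{\sigma(0)}a_{\sigma(3)}$, work with the polynomial form of $K_4$ directly, and close with the factorization $C-X=(a_0a_1-1)(a_2a_3-1)+a_0a_3>0$, which needs nothing beyond $a_i\ge 1$. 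Your version is therefore marginally more general: it never invokes the canonical-form assumption or the permutiple inequality $a_0>a_{\sigma(0)}$, so it applies verbatim to non-canonical expansions, and it isolates exactly one symmetry relation as the full hypothesis needed for the bound. What the paper's route buys is consistency with its own house style---the same ratio normalization is used in its treatment of the three-digit case---but as a proof of this theorem, yours is a clean and slightly leaner alternative.
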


\begin{proof}
 Let $r=[a_0;a_1,a_2,a_3]$ be a symmetric $(\sigma,k)$-permutiple. 
 In general, $\frac{K_4(x_0,x_1,x_2,x_3)}{x_0 x_1 x_2 x_3}
 =1 + \frac{1}{x_0 x_1} + \frac{1}{x_1 x_2} + \frac{1}{x_2 x_3} + \frac{1}{x_0 x_1 x_2 x_3}.$
 Hence, 
 $$\frac{p_3}{p'_3}=\frac{K_4(a_0,a_1,a_2,a_3)}{K_4(a_{\sigma(0)},a_{\sigma(1)},a_{\sigma(2)},a_{\sigma(3)})}
 =\frac{1 + \frac{1}{a_0 a_1} + \frac{1}{a_1 a_2} + \frac{1}{a_2 a_3} + \frac{1}{a_0 a_1 a_2 a_3}}
       {1 + \frac{1}{a_{\sigma(0)} a_{\sigma(1)}} + \frac{1}{a_{\sigma(1)} a_{\sigma(2)}} + \frac{1}{a_{\sigma(2)} a_{\sigma(3)}} + \frac{1}{a_0 a_1 a_2 a_3}}.$$
But, since $r$ is symmetric, $a_{1}a_{2}=a_{\sigma(1)}a_{\sigma(2)}$. Thus, we may rewrite the equation above as 
 $$
 \frac{p_3}{p'_3}
 =\frac{1 + \frac{1}{a_0 a_1} + \frac{1}{a_1 a_2} + \frac{1}{a_2 a_3} + \frac{1}{a_0 a_1 a_2 a_3}}
       {1 + \frac{1}{a_1 a_2} + \frac{1}{a_0 a_1 a_2 a_3} + \frac{1}{a_{\sigma(0)} a_{\sigma(1)}} + \frac{1}{a_{\sigma(2)} a_{\sigma(3)}}}.$$       
It follows that 
$$ \frac{p_3}{p'_3}
<\frac{1 + \frac{1}{a_0 a_1} + \frac{1}{a_1 a_2} + \frac{1}{a_2 a_3} + \frac{1}{a_0 a_1 a_2 a_3}}
       {1 + \frac{1}{a_1 a_2} + \frac{1}{a_0 a_1 a_2 a_3} }       
<1+\frac{\frac{1}{a_0 a_1} + \frac{1}{a_2 a_3}}{1 + \frac{1}{a_1 a_2} + \frac{1}{a_0 a_1 a_2 a_3}}.
$$ 
Now, $a_0 \geq 2$ since $a_0>a_{\sigma(0)}$ for any permutiple, and $a_3 \geq 2$ since our representation is canonical. 
Therefore, 
$ \frac{p_3}{p'_3}
<1+\frac{1}{1 + \frac{1}{a_1 a_2}+\frac{1}{a_0 a_1 a_2 a_3}}
<2$,
and the result follows by an application of Theorem \ref{basic}.
\end{proof}

\begin{theorem}
 A 4-digit $(\sigma,k)$-permutiple is symmetric if and only if it is either perfect or a reverse multiple.
 \label{4-digit_equiv_sym}
\end{theorem}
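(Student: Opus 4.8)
The plan is to prove the two implications separately, with the forward direction (symmetric $\Rightarrow$ perfect or reverse multiple) carrying essentially all of the work. The reverse implication is immediate: Theorem \ref{perf_rev_implies_sym} already records that every reverse multiple and every perfect permutiple is symmetric. For the forward direction, let $r=[a_0;a_1,a_2,a_3]$ be a symmetric $(\sigma,k)$-permutiple. By Theorem \ref{sym_contin_pres} it is continuant-preserving, so $p_3=p'_3$ and, by Theorem \ref{contin_pres_equiv}, $q'_3=k\,q_3$. Extracting the leading partial quotient from each numerator continuant gives $p_3=a_0q_3+(a_2a_3+1)$ and $p'_3=a_{\sigma(0)}q'_3+(a_{\sigma(2)}a_{\sigma(3)}+1)$; subtracting and using $p_3=p'_3$ and $q'_3=kq_3$ yields the single governing identity
\begin{equation}
(a_0-k\,a_{\sigma(0)})\,q_3=a_{\sigma(2)}a_{\sigma(3)}-a_2a_3.
\label{governing_id}
\end{equation}
Everything below rests on Equation \ref{governing_id} together with the two symmetry relations $a_0a_3=a_{\sigma(0)}a_{\sigma(3)}$ and $a_1a_2=a_{\sigma(1)}a_{\sigma(2)}$.

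First I would dispose of the case $a_0=k\,a_{\sigma(0)}$. Here the left side of Equation \ref{governing_id} vanishes, so $a_2a_3=a_{\sigma(2)}a_{\sigma(3)}$. Substituting $a_0=k\,a_{\sigma(0)}$ into $a_0a_3=a_{\sigma(0)}a_{\sigma(3)}$ gives $a_{\sigma(3)}=k\,a_3$; feeding this back into $a_2a_3=a_{\sigma(2)}a_{\sigma(3)}$ gives $a_2=k\,a_{\sigma(2)}$; and inserting that into $a_1a_2=a_{\sigma(1)}a_{\sigma(2)}$ gives $a_{\sigma(1)}=k\,a_1$. These four relations are exactly Equation \ref{perfect_eq}, so $r$ is perfect. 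This bootstrapping is the cleanest part of the argument: the perfect permutiples are precisely those for which the leading quotients already scale by $k$.

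It then remains to treat $a_0\neq k\,a_{\sigma(0)}$ and to show $r$ must be a reverse multiple. Since $q_3=a_1a_2a_3+a_1+a_3>a_2a_3$, Equation \ref{governing_id} forces $a_{\sigma(2)}a_{\sigma(3)}-a_2a_3$ to be a strictly positive multiple of $q_3$; rewriting $a_{\sigma(2)}a_{\sigma(3)}=a_0a_1a_2a_3/(a_{\sigma(0)}a_{\sigma(1)})$ and comparing with $q_3>a_1a_2a_3$ produces $a_{\sigma(0)}a_{\sigma(1)}<a_0$, whence $a_{\sigma(0)}<a_0$ and, through symmetry, $a_{\sigma(3)}=a_0a_3/a_{\sigma(0)}>a_3$ and $\sigma(3)\neq 3$. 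Together with $a_0>a_{\sigma(0)}\geq 1$ these restrictions leave only a short list of admissible permutations. For each survivor I would impose the remaining consequence of continuant preservation, $a_0a_1+a_2a_3=a_{\sigma(0)}a_{\sigma(1)}+a_{\sigma(2)}a_{\sigma(3)}$, together with the integrality of $k$ coming from $q'_3=kq_3$; in every case these collapse the inequalities into equalities among the partial quotients (typically $a_1=a_2$ or $a_1=a_2=a_3$) that make the string $[a_{\sigma(0)};a_{\sigma(1)},a_{\sigma(2)},a_{\sigma(3)}]$ coincide, as a sequence, with the reversal $[a_3;a_2,a_1,a_0]$. Hence $r$ is a $k$-reverse multiple, completing the classification.

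The main obstacle is this last finite case analysis. The delicate point is that the symmetry relations together with $p_3=p'_3$ admit spurious solutions that are not permutiples at all: for instance $[18;6,6,2]$ and its $(0\,1\,3\,2)$-rearrangement $[6;2,18,6]$ share the numerator $1453$ and satisfy both symmetry relations, yet their ratio is $224/80=14/5$, not an integer. Such pseudo-solutions are removed only by invoking the integrality of the multiplier $k$ in $q'_3=k\,q_3$ (equivalently, in Equation \ref{governing_id}). I expect the work to lie in organizing the casework so that this integrality constraint is applied uniformly across the admissible permutations, rather than being rediscovered one permutation at a time.
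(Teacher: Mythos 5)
Your reverse direction matches the paper (it is exactly Theorem \ref{perf_rev_implies_sym}), and the first half of your forward direction is correct and, in fact, cleaner than the paper's handling of the same alternative. The governing identity $(a_0-k\,a_{\sigma(0)})\,q_3=a_{\sigma(2)}a_{\sigma(3)}-a_2a_3$ is valid (it is just $p_3=p_3'$ with $q_3'=kq_3$ substituted, both supplied by Theorems \ref{sym_contin_pres} and \ref{contin_pres_equiv}), and your bootstrap in the case $a_0=k\,a_{\sigma(0)}$ --- cycling through the two symmetry relations to get $a_{\sigma(3)}=ka_3$, then $a_2=ka_{\sigma(2)}$, then $a_{\sigma(1)}=ka_1$, hence Equation \ref{perfect_eq} --- is complete and correct. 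For comparison, the paper reaches the perfect alternative by assuming $r$ is \emph{not} a reverse multiple, showing the values $a_{\sigma(0)},a_{\sigma(3)}$ must lie in $\{a_1,a_2\}$ so that $a_0a_3=a_1a_2$, and then eliminating three of the four resulting assignments; its surviving Case 1 is forced to be perfect by a sign argument on $a_0-ka_1+(1+a_1a_2)(a_2-ka_3)=0$.

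The genuine gap is the complementary case: when $a_0\neq k\,a_{\sigma(0)}$ you must prove $r$ is a reverse multiple, and this --- which is where essentially all of the difficulty of the theorem lives --- is left as a plan rather than executed (``for each survivor I would impose\dots,'' ``I expect the work to lie in organizing the casework''). The constraints you do derive there ($a_{\sigma(0)}a_{\sigma(1)}<a_0$, $a_{\sigma(3)}>a_3$, $\sigma(3)\neq 3$) are correct but do not by themselves produce a short list of cases, let alone collapse them: the constraints are on digit \emph{values}, digits may repeat, and the desired conclusion is that the permuted string equals the reversed string \emph{as a sequence}, a subtler target than identifying $\sigma$ with the reversal permutation. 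Your own pseudo-solution $[18;6,6,2]$ versus $[6;2,18,6]$ shows that symmetry plus $p_3=p_3'$ admits non-permutiple solutions, so the integrality of $k$ must be wielded inside each case; this is precisely what the paper's four-case analysis does, using Equation \ref{4-digit_contin_pres_sym_equation_1} for its Cases 2 and 3 and a divisibility argument modulo $a_1$ built on Equation \ref{4-digit_contin_pres_sym_equation_2} for its Case 4 --- inequalities alone do not suffice. Note also that your dichotomy makes the unfinished work harder than the paper's: assuming ``not a reverse multiple'' immediately pins $a_{\sigma(0)}$ and $a_{\sigma(3)}$ to the values $a_1,a_2$ and yields $a_0a_3=a_1a_2$, reducing everything to four explicit assignments, whereas assuming $a_0\neq ka_{\sigma(0)}$ provides no comparable reduction. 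Until that casework is actually carried out, the forward implication remains unproven.
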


\begin{proof}
 The reverse implication holds for any number of digits by Theorem \ref{perf_rev_implies_sym}. 
 Suppose then that $r=[a_0;a_1,a_2,a_3]$ is a
 symmetric $(\sigma,k)$-permutiple so that $a_0 a_{3} =a_{\sigma(0)} a_{\sigma(3)}$.
 Thus, since $r$ is continuant-preserving by Theorem \ref{sym_contin_pres}, it follows by definition that
 \begin{equation}
   a_0 a_1+a_2 a_3=a_{\sigma(0)}a_{\sigma(1)}+a_{\sigma(2)}a_{\sigma(3)}.
   \label{4-digit_contin_pres_sym_equation_1}
 \end{equation}
 
 Now, if $r$ is a reverse multiple, then we are finished.
 Otherwise, suppose that $r$ is not a reverse multiple. 
 
 If $a_{\sigma(3)}=a_0$, then by symmetry $a_{\sigma(0)}=a_3$.
 It would then follow that $a_{\sigma(1)} a_{\sigma(2)}=a_1 a_{2}$, but since $r$ is not a reverse multiple,
 the only possibility is that $a_{\sigma(1)}=a_1$ and $a_{\sigma(2)}=a_2$. 
 Equation \ref{4-digit_contin_pres_sym_equation_1} would then imply that $a_0=a_3$ and $a_1=a_2$.
 In other words, the continued fraction expansion of $r$ is palindromic.
 The above facts imply that $r=r'$, but the only way this could be is if $k=1$. 
 Therefore,  $a_{\sigma(3)} \neq a_0$.
 Moreover, if $a_{\sigma(3)}=a_3$, then $a_{\sigma(0)} = a_0$ by symmetry.
 But this is also impossible since $a_0>a_{\sigma(0)}$ for any permutiple. 
 Thus, either $a_{\sigma(3)}=a_1$ or $a_{\sigma(3)}=a_2$. That is, we have shown that $\sigma(3) \neq 0$ and $\sigma(3) \neq 3$.
 
 If we assume that $a_{\sigma(0)}=a_3$, then, by symmetry, it follows that $a_{\sigma(3)}=a_0$ which contradicts
 the conclusion above. Therefore, $a_{\sigma(0)} \neq a_3$. Thus, again since $a_0>a_{\sigma(0)}$,
 we have that $a_{\sigma(0)}=a_1$ or $a_{\sigma(0)}=a_2$. In other words, $\sigma(0) \neq 3$ and $\sigma(0) \neq 0$.
 
 We have shown that in any case, $a_0 a_3=a_{\sigma(0)}a_{\sigma(3)}=a_1a_2$, which gives rise to four possible cases.

 Case 1: If $a_{\sigma(0)}=a_1$, $a_{\sigma(1)}=a_0$, $a_{\sigma(2)}=a_3$, and $a_{\sigma(3)}=a_2$,
 then $\frac{a_0}{a_{\sigma(0)}}=\frac{a_{\sigma(1)}}{a_1}$ and $\frac{a_2}{a_{\sigma(2)}}=\frac{a_{\sigma(3)}}{a_3}$. 
 By symmetry, we then have
 $\frac{a_0}{a_{\sigma(0)}}=\frac{a_{\sigma(1)}}{a_1}=\frac{a_2}{a_{\sigma(2)}}=\frac{a_{\sigma(3)}}{a_3}$.
 Now, since $r$ is continuant-preserving, it follows by Theorem \ref{contin_pres_equiv} that
  \begin{equation}
   a_{\sigma(1)}a_{\sigma(2)}a_{\sigma(3)}+a_{\sigma(1)}+a_{\sigma(3)}=k(a_1 a_2 a_3 + a_1 +a _3).
   \label{4-digit_contin_pres_sym_equation_2}
 \end{equation} 
 Thus, $a_{0}a_{3}a_{2}+a_{0}+a_{2}=k(a_1 a_2a_3+a_1+a_3)$.
 Therefore, since $a_0a_3=a_1a_2$ as also shown above, we have $a_{1}a_{2}^2+a_{0}+a_{2}=k(a_1 a_2a_3+a_1+a_3)$,
 which becomes $a_0-ka_1+(1+a_1a_2)(a_2-ka_3)=0$. 
 For a contradiction, suppose $a_0 > k a_1$. 
 Then it would also have to be that $a_2 > k a_{3}$ since $a_0a_3=a_1a_2$. But this implies that  
 $a_0-ka_1+(1+a_1a_2)(a_2-ka_3)>0$ which contradicts the previous equation. Assuming $a_0 < k a_1$ similarly
 leads to a contradiction. Therefore, in order for the above equation to hold, it can only be that $a_0 = k a_1$
 and $a_2 = k a_3$.
 Hence, $\frac{a_0}{a_{\sigma(0)}}=\frac{a_{\sigma(1)}}{a_1}=\frac{a_2}{a_{\sigma(2)}}=\frac{a_{\sigma(3)}}{a_3}=k$, 
 and the result holds for Case 1.

 Case 2: If $a_{\sigma(0)}=a_1$, $a_{\sigma(1)}=a_3$, $a_{\sigma(2)}=a_0$, and $a_{\sigma(3)}=a_2$,
 then, by Equation \ref{4-digit_contin_pres_sym_equation_1}, $a_0 a_1+a_2 a_3=a_{1}a_{3}+a_{0}a_{2}$.
 It follows that $(a_1-a_2)(a_3-a_{0})=0$ so that
 $a_0=a_3$ and $a_1=a_2$.
 Thus, since $a_0 a_3=a_1a_2$ as demonstrated above, we conclude that all the digits are equal. 
 However, this implies that $k=1$.
 Therefore, Case 2 is impossible.

 Case 3: If $a_{\sigma(0)}=a_2$, $a_{\sigma(1)}=a_0$, $a_{\sigma(2)}=a_3$, and $a_{\sigma(3)}=a_1$,
 then, again by Equation \ref{4-digit_contin_pres_sym_equation_1}, $a_0 a_1+a_2 a_3=a_{2}a_{0}+a_{3}a_{1}$
 which leads to the same conclusion as Case 2, so that Case 3 is also impossible.

 Case 4: Finally, suppose that $a_{\sigma(0)}=a_2$, $a_{\sigma(1)}=a_3$, $a_{\sigma(2)}=a_0$, and $a_{\sigma(3)}=a_1$.
 Then, by Equation \ref{4-digit_contin_pres_sym_equation_2}, $a_{3}a_{0}a_{1}+a_{3}+a_{1}=k(a_1 a_2a_3+a_1+a_3)$.
 It follows that
 $a_0a_3=a_1a_2=\frac{(k-1)(a_1+a_3)}{a_1-ka_3}.$ 
 Therefore, $a_1$ must be strictly greater than $ka_3$.
 Using Equation \ref{4-digit_contin_pres_sym_equation_2} again, and reducing modulo $a_1$, we see that
 $(k-1)a_3 \equiv 0$ (mod $a_1$). Then, for some $\alpha$, $(k-1)a_3=\alpha a_1$. 
 But then $a_1>k a_3$ implies that $a_1-a_3>(k-1)a_3=\alpha a_1$. Thus, $1-\frac{a_3}{a_1}>\alpha$,
which means that $\alpha \leq 0$. By the above, it follows that $a_3 \leq 0$, which is a contradiction.   
 Case 4 is therefore impossible. 
 
 Since Case 1 is the only possibility, the result is established.
\end{proof}

Computer generated evidence strongly suggests that all 4-digit permutiples are symmetric.
We suspect that a tedious argument like the one above, and perhaps an application of Theorem \ref{sym_contin_pres},
may be involved.
However, despite our best efforts, we have not been able to establish this claim. 
We therefore leave the following conjecture.

\begin{conjecture}
 Every 4-digit $(\sigma,k)$-permutiple is symmetric.
\end{conjecture}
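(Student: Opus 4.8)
The plan is to prove the conjecture by a two-stage reduction to a finite case analysis over permutations: first show that every $4$-digit permutiple is continuant-preserving, and then deduce symmetry from continuant-preservation together with the permutiple equation. The organising identity is the Euler continuant expansion
\[
K_4(x_0,x_1,x_2,x_3)=x_0x_1x_2x_3+x_0x_1+x_0x_3+x_2x_3+1,
\]
whose leading product and trailing $1$ are invariant under reindexing. Thus for $r=[a_0;a_1,a_2,a_3]$ a $(\sigma,k)$-permutiple,
\[
p_3-p'_3=(a_0a_1+a_0a_3+a_2a_3)-(a_{\sigma(0)}a_{\sigma(1)}+a_{\sigma(0)}a_{\sigma(3)}+a_{\sigma(2)}a_{\sigma(3)}),
\]
so continuant-preservation is precisely the vanishing of the right-hand side. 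Since $\gcd(p_3,q_3)=\gcd(p'_3,q'_3)=1$, the permutiple equation forces $p'_3\mid p_3$, so $p_3/p'_3$ is a positive integer and it suffices, as in Theorem \ref{basic}, to rule out $p_3\geq 2p'_3$.

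Stage one is to show $p_3<2p'_3$. The symmetric argument of Theorem \ref{sym_contin_pres} cancels the two middle terms $1/(a_1a_2)$ and $1/(a_{\sigma(1)}a_{\sigma(2)})$; without symmetry this cancellation is unavailable and the crude bound no longer falls below $2$. The dangerous configurations are those in which an interior digit equals $1$, since then $a_0a_3$ can be as large as the full product $a_0a_1a_2a_3$ and $p_3-p'_3$ need not be small. Here I would argue permutation by permutation, using the two robust constraints available for a canonical $r$, namely $a_0\geq 2$ and $a_3\geq 2$ (the latter from canonical form), together with $a_0>a_{\sigma(0)}$, and leaning on the integrality of $p_3/p'_3$ to convert the near-bound $p_3/p'_3<2$ into the exact equality $p_3=p'_3$.

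Stage two assumes continuant-preservation and seeks the two anti-diagonal identities $a_0a_3=a_{\sigma(0)}a_{\sigma(3)}$ and $a_1a_2=a_{\sigma(1)}a_{\sigma(2)}$ that constitute symmetry. I would enumerate the admissible permutations (those with $\sigma(0)\neq 0$) and, for each, combine the vanishing difference above with the denominator relation $K_3(a_{\sigma(1)},a_{\sigma(2)},a_{\sigma(3)})=k\,K_3(a_1,a_2,a_3)$ supplied by Theorem \ref{contin_pres_equiv}. This is the argument of Theorem \ref{4-digit_equiv_sym} run in the opposite direction: there symmetry was assumed and used to pin the permutation down to the perfect case; here the two relations should instead constrain the permutation and yield the anti-diagonal identities, after which symmetry holds and Theorem \ref{4-digit_equiv_sym} even identifies the permutiple as perfect or a reverse multiple.

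The step I expect to be the true obstacle is Stage one. The estimate of Theorem \ref{sym_contin_pres} fails for exactly the reason that symmetry is what we are trying to establish, and its replacement must survive the degenerate cases $a_1=1$ and $a_2=1$ one permutation at a time. Producing a uniform, rigorous reason that $p_3<2p'_3$ for every $4$-digit permutiple --- rather than a finite computer check --- is, I suspect, precisely the gap that keeps the statement a conjecture; such a bound would immediately feed Stage two and close the argument.
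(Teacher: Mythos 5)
First, a point of orientation: this statement is one the paper itself leaves open --- the author states that despite computer evidence and ``best efforts'' no proof is known --- so there is no paper proof to compare against; the only question is whether your argument closes the gap, and it does not. Your proposal is a plan whose two stages are both left unexecuted. Stage one (every $4$-digit permutiple satisfies $p_3<2p'_3$, hence is continuant-preserving by integrality of $p_3/p'_3$) is exactly the hard step: the estimate in Theorem \ref{sym_contin_pres} needs symmetry to replace $\frac{1}{a_{\sigma(1)}a_{\sigma(2)}}$ by $\frac{1}{a_1a_2}$ in the denominator, and without that substitution the constraints you invoke ($a_0\geq 2$, $a_3\geq 2$, $a_0>a_{\sigma(0)}$) only give a crude bound of roughly $p_3/p'_3<3$, not $<2$; the dangerous configurations with $a_1=1$ or $a_2=1$ that you flag are precisely where the bound breaks. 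You gesture at a ``permutation by permutation'' analysis but never carry one out, and you yourself concede that this step is ``precisely the gap that keeps the statement a conjecture.'' A proposal whose author identifies its central step as an open problem is a research program, not a proof.

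Stage two has a subtler defect: the implication you want there (continuant-preserving $\Rightarrow$ symmetric) is false as a general principle, so it cannot follow from the soft argument you sketch. The paper's own $5$-digit example $[9;3,2,8,2]=4\cdot[2;3,9,2,8]$ is continuant-preserving (both numerators equal $1161$) yet is explicitly exhibited as non-symmetric. Hence any derivation of symmetry from continuant-preservation must exploit something specific to four digits, and your sketch --- combine the vanishing numerator difference with the denominator relation from Theorem \ref{contin_pres_equiv} and let them ``constrain the permutation'' --- never identifies what that something is. Note also that this is not Theorem \ref{4-digit_equiv_sym} ``run in the opposite direction'': that theorem's case analysis begins from the anti-diagonal identity $a_0a_3=a_{\sigma(0)}a_{\sigma(3)}=a_1a_2$, which is exactly what you do not have; without it, each admissible permutation yields a genuine Diophantine system in four unknowns rather than the clean factorizations such as $(a_1-a_2)(a_3-a_0)=0$ used there. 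Both halves of your reduction therefore remain open, which is consistent with the paper's own assessment of the problem's difficulty.
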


If the above is indeed true, we would have by Theorem \ref{4-digit_equiv_sym} the following corollary.

\begin{conjecture}
 Any 4-digit $(\sigma,k)$-permutiple is either perfect, a $k$-reverse multiple, or both.
\end{conjecture}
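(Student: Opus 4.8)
The target conjecture follows at once from its companion, the conjecture that every 4-digit $(\sigma,k)$-permutiple is symmetric: granting symmetry, Theorem \ref{4-digit_equiv_sym} immediately classifies such a permutiple as perfect, a $k$-reverse multiple, or both. So the entire burden is to prove symmetry, that is, to establish $a_0 a_3 = a_{\sigma(0)} a_{\sigma(3)}$ and $a_1 a_2 = a_{\sigma(1)} a_{\sigma(2)}$ for $r=[a_0;a_1,a_2,a_3]$. Since the product of all four digits is permutation-invariant, $a_0 a_1 a_2 a_3 = a_{\sigma(0)} a_{\sigma(1)} a_{\sigma(2)} a_{\sigma(3)}$, these two equations are equivalent, and it suffices to prove the single relation $a_0 a_3 = a_{\sigma(0)} a_{\sigma(3)}$.

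The plan is to first try to show that every 4-digit permutiple is continuant-preserving and then extract symmetry from the resulting continuant identity. If continuant-preservation holds, then $K_4(a_0,a_1,a_2,a_3) = K_4(a_{\sigma(0)},a_{\sigma(1)},a_{\sigma(2)},a_{\sigma(3)})$, which after cancelling the common full product and the constant $1$ reduces to $a_0 a_1 + a_0 a_3 + a_2 a_3 = a_{\sigma(0)} a_{\sigma(1)} + a_{\sigma(0)} a_{\sigma(3)} + a_{\sigma(2)} a_{\sigma(3)}$, while Theorem \ref{contin_pres_equiv} supplies the companion denominator relation $K_3(a_{\sigma(1)},a_{\sigma(2)},a_{\sigma(3)}) = k\,K_3(a_1,a_2,a_3)$. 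I would then mimic the four-case argument of Theorem \ref{4-digit_equiv_sym}: enumerate the permutations $\sigma$ with $\sigma(0)\neq 0$ (forced by $a_0 > a_{\sigma(0)}$), discard those incompatible with $a_3 > 1$ and with the separation $r \geq 2r'$ coming from $k \geq 2$, and in each surviving case combine the two displayed relations with the Diophantine/inequality reasoning used in Case 4 there to force $a_0 a_3 = a_{\sigma(0)} a_{\sigma(3)}$ or a contradiction.

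The crux, and the reason this remains a conjecture, is establishing continuant-preservation \emph{without} first assuming symmetry. The clean estimate $p_3/p'_3 < 2$, which by Theorem \ref{basic} yields continuant-preservation, is exactly what Theorem \ref{sym_contin_pres} delivers, but its proof uses the symmetric identity $a_1 a_2 = a_{\sigma(1)} a_{\sigma(2)}$ to rewrite the denominator; without that identity the $\sigma$-adjacent products $a_{\sigma(0)} a_{\sigma(1)}$, $a_{\sigma(1)} a_{\sigma(2)}$, $a_{\sigma(2)} a_{\sigma(3)}$ appearing in the denominator of $p_3/p'_3$ need not be controlled by the numerator's products, and the ratio is not obviously below $2$. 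Breaking this circularity is the main obstacle: one must either bound $p_3/p'_3$ using only the raw permutiple constraints $a_0 > a_{\sigma(0)}$, $a_3 > 1$, and $r = k r'$ with $k \geq 2$, or else handle the hypothetical non-continuant-preserving permutiples (those with $p_3 = 2 p'_3$, and so on) directly.

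I expect the hardest configurations to be the permutations that interchange an outer index in $\{0,3\}$ with an inner index in $\{1,2\}$, since these are precisely the ones for which neither $a_0 a_3$ nor $a_1 a_2$ is preserved by inspection, and for which the governing equations become the sort of coupled Diophantine relation seen in Case 4 of Theorem \ref{4-digit_equiv_sym}. Pushing those through, ideally by a uniform congruence-plus-inequality argument rather than an exhaustive grind, is where the real work lies, and where the computer evidence suggests a clean obstruction should exist that I have not yet been able to isolate.
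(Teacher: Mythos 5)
Your reduction is exactly the one the paper itself makes: this statement appears there only as a conjecture, obtained conditionally from the companion conjecture that every 4-digit $(\sigma,k)$-permutiple is symmetric, via Theorem \ref{4-digit_equiv_sym}. You do not close that gap, and neither does the paper --- the author explicitly states that, despite suspecting an argument in the style of Theorem \ref{4-digit_equiv_sym} (perhaps invoking Theorem \ref{sym_contin_pres}) should work, symmetry of all 4-digit permutiples could not be established. Your diagnosis of the obstruction is accurate and matches the paper's situation precisely: the proof of Theorem \ref{sym_contin_pres} uses the symmetric relation $a_1 a_2 = a_{\sigma(1)} a_{\sigma(2)}$ to force $p_3/p'_3 < 2$ before Theorem \ref{basic} can be applied, so continuant-preservation cannot be obtained along that route without first assuming the very symmetry one is trying to prove. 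Your one genuine addition --- that invariance of the full product $a_0a_1a_2a_3$ makes the two symmetry relations equivalent, so only $a_0 a_3 = a_{\sigma(0)} a_{\sigma(3)}$ need be proved --- is correct and mildly sharpens the target, but the statement remains a conjecture, not a theorem, for both you and the paper.
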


\section{Concatenation}

With several permutiple types and many permutiple examples in hand, we now consider ways of constucting new permutiples from old. 
Let $c_1$ and $c_2$ be any finite simple continued fractions,  
$[b_0;b_1,b_2,\ldots,b_n]$ and $[b'_0;b'_1,b'_2,\ldots,b'_m]$, respectively.
We let $c_1 \circ c_2$ denote the continued fraction obtained by concatenating the digit strings of $c_1$ and $c_2$,
that is, $[b_0;b_1,b_2,\ldots,b_n, b'_0,b'_1,b'_2,\ldots,b'_m]$, and call this quantity
the {\it concatenation of $c_1$ and $c_2$}.

For convenience, we also introduce a new notation for the continuant of a $(\sigma,k)$-permutiple, $r$,
with continued fraction expansion $[a_0;a_1,\ldots,a_n]$;
we define $\langle r \rangle$ to mean $K_{n+1}(a_0,a_1,\ldots,a_n)$. Furthermore,
we define $\langle {}_{-}r \rangle$ to mean $K_n(a_1,\ldots,a_n)$, and 
$\langle r_{-} \rangle$ to mean $K_n(a_0,a_1,\ldots,a_{n-1})$.

\begin{theorem}
 Let $r=[a_0; a_{1}, \ldots, a_n]$ be a continuant-preserving $(\sigma,k)$-permutiple with 
 $r'=[a_{\sigma(0)}; a_{\sigma(1)}, \ldots, a_{\sigma(n)}]$, such that $q_{n-1}=q'_{n-1}$ and $p_{n-1}=kp'_{n-1}$.
 Also, let $s=[b_0;b_1,\ldots,b_m]$ be any continuant-preserving $(\tau,k)$-permutiple with 
 $s'=[b_{\tau(0)};b_{\tau(1)},\ldots,b_{\tau(m)}]$.
 Then the number $r \circ s$  is also a continuant-preserving permutiple with $r \circ s = k(r'\circ s')$.
 \label{concatenation}
\end{theorem}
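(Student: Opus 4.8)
The plan is to introduce the natural ``block'' permutation $\rho$ on $n+m+2$ symbols that acts as $\sigma$ on the positions $0,\ldots,n$ and as a copy of $\tau$ shifted by $n+1$ on the positions $n+1,\ldots,n+m+1$. With this choice the permuted concatenation is exactly $(r\circ s)_\rho = r'\circ s'$, so it suffices to prove two continuant identities: the numerator identity $\langle r\circ s\rangle = \langle r'\circ s'\rangle$ (which is precisely the continuant-preserving condition for the pair $(\rho,k)$) and the denominator scaling $\langle {}_{-}(r'\circ s')\rangle = k\,\langle {}_{-}(r\circ s)\rangle$. Reading the value of each continued fraction as the literal continuant quotient, these two facts together force
\[
r\circ s = \frac{\langle r\circ s\rangle}{\langle {}_{-}(r\circ s)\rangle}
= \frac{\langle r'\circ s'\rangle}{\tfrac{1}{k}\langle {}_{-}(r'\circ s')\rangle}
= k\,\frac{\langle r'\circ s'\rangle}{\langle {}_{-}(r'\circ s')\rangle}
= k(r'\circ s').
\]

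The engine of the argument is the classical continuant splitting identity
\[
K_{i+j}(x_1,\ldots,x_{i+j}) = K_i(x_1,\ldots,x_i)\,K_j(x_{i+1},\ldots,x_{i+j}) + K_{i-1}(x_1,\ldots,x_{i-1})\,K_{j-1}(x_{i+2},\ldots,x_{i+j}).
\]
First I would apply it with the cut placed between the last digit of $r$ and the first digit of $s$ to obtain
\[
\langle r\circ s\rangle = \langle r\rangle\langle s\rangle + \langle r_{-}\rangle\langle {}_{-}s\rangle,
\qquad
\langle {}_{-}(r\circ s)\rangle = \langle {}_{-}r\rangle\langle s\rangle + K_{n-1}(a_1,\ldots,a_{n-1})\,\langle {}_{-}s\rangle,
\]
together with the analogous pair for $r'\circ s'$. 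The payoff is that every term now factors into a continuant coming purely from the $r$-block times one coming purely from the $s$-block, so the hypotheses on $r$ and $s$ can be fed in termwise.

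Next I would collect the scaling relations. Continuant-preservation of $r$ and $s$ gives $\langle r\rangle=\langle r'\rangle$ and $\langle s\rangle=\langle s'\rangle$, while Theorem \ref{contin_pres_equiv}, item {\it 2}, supplies $\langle {}_{-}r'\rangle=k\langle {}_{-}r\rangle$ and $\langle {}_{-}s'\rangle=k\langle {}_{-}s\rangle$; the two extra hypotheses are exactly that $K_{n-1}(a_1,\ldots,a_{n-1})$ is $\sigma$-invariant (the condition $q_{n-1}=q'_{n-1}$) and that $\langle r_{-}\rangle=k\langle r'_{-}\rangle$ (the condition $p_{n-1}=kp'_{n-1}$). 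Substituting into the numerator identity, the leading product is preserved outright, and the cross term satisfies $\langle r'_{-}\rangle\langle {}_{-}s'\rangle = \tfrac1k\langle r_{-}\rangle\cdot k\langle {}_{-}s\rangle = \langle r_{-}\rangle\langle {}_{-}s\rangle$, so the two factors of $k$ cancel and $\langle r\circ s\rangle=\langle r'\circ s'\rangle$. For the denominator, by contrast, each of the two terms picks up a single uncancelled factor of $k$ (from $\langle {}_{-}r'\rangle=k\langle {}_{-}r\rangle$ in the first term and from $\langle {}_{-}s'\rangle=k\langle {}_{-}s\rangle$ in the second), while the $\sigma$-invariant continuant $K_{n-1}(a_1,\ldots,a_{n-1})$ contributes no extra factor, yielding $\langle {}_{-}(r'\circ s')\rangle = k\,\langle {}_{-}(r\circ s)\rangle$.

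I expect the main obstacle to be clerical rather than conceptual: getting the index ranges in the splitting identity exactly right (in particular, remembering that the lower continuants $\langle r_{-}\rangle$ and $K_{n-1}(a_1,\ldots,a_{n-1})$ drop $a_n$ from opposite ends), and recognizing that the two seemingly \emph{ad hoc} hypotheses $q_{n-1}=q'_{n-1}$ and $p_{n-1}=kp'_{n-1}$ are tuned precisely so that exactly one factor of $k$ survives in each denominator term but none survives net in the numerator cross term. A minor point to verify is that the continuant quotient is the literal value of the continued fraction, so that no reduction to lowest terms is required and the passage from the two continuant identities to $r\circ s=k(r'\circ s')$ is immediate; the numerator identity then gives continuant-preservation of $r\circ s$ as a $(\rho,k)$-permutiple.
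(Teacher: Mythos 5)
Your proposal is correct and follows essentially the same route as the paper's own proof: the same continuant splitting identity at the junction of the two digit strings, the same termwise substitution of the continuant-preserving relations $\langle r\rangle=\langle r'\rangle$, $\langle {}_{-}r'\rangle=k\langle {}_{-}r\rangle$ (and their analogues for $s$) together with the two Landess hypotheses, yielding the numerator identity and the denominator scaling by $k$. Your explicit remarks about the block permutation $\rho$ and about the continuant quotient being the literal value of the continued fraction are points the paper leaves implicit, but they do not change the argument.
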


\begin{proof}
Since $r$ and $s$ are continuant-preserving, we have $\langle r \rangle=\langle r' \rangle$,
$\langle s \rangle=\langle s' \rangle$, $\langle {}_{-}r' \rangle= k \langle {}_{-}r \rangle$,
and $\langle {}_{-}s' \rangle=k\langle {}_{-}s \rangle$. 
Also, writing $q_{n-1}=q'_{n-1}$ and $p_{n-1}=kp'_{n-1}$ in terms of continuants, we have
$\langle{}_{-}r_{-}\rangle=\langle{}_{-}r'_{-}\rangle$ and $\langle r_{-} \rangle=k \langle r'_{-} \rangle$, respectively.
Using the above and properties of continuants of concatenations \cite{benjamin,hensley}, we have
$
\langle {}_{-}r'\circ s' \rangle
=\langle {}_{-}r' \rangle \langle s' \rangle + \langle {}_{-}r'_{-} \rangle \langle {}_{-}s' \rangle
=\langle {}_{-}r' \rangle \langle s' \rangle + \langle {}_{-}r'_{-} \rangle \langle {}_{-}s' \rangle
=k \langle {}_{-}r \rangle \langle s \rangle + \langle {}_{-}r_{-} \rangle k\langle {}_{-}s \rangle
=k\langle {}_{-}r\circ s \rangle.
$
Therefore, 
$\frac{\langle r \circ s \rangle}{\langle {}_{-}r\circ s \rangle}
=k\frac{\langle r' \circ s' \rangle}{\langle {}_{-}r' \circ s' \rangle}$ so that  
$r \circ s$ is a permutiple.
Now, again using properties of continuants, 
$\langle r' \circ s' \rangle 
=\langle r' \rangle \langle s' \rangle + \langle r'_{-} \rangle \langle {}_{-}s' \rangle
=\langle r \rangle \langle s \rangle + k \langle r'_{-} \rangle \langle {}_{-}s \rangle
=\langle r \rangle \langle s \rangle + \langle r_{-} \rangle \langle {}_{-}s \rangle
=\langle r' \circ s' \rangle$ so that $r \circ s$ is continuant-preserving.
\end{proof}
  
As the reader shall presently see, permutiples which satisfy the hypothesis of Theorem \ref{concatenation} 
have other special properties, and for the sake of tidier exposition, we shall give them a name. 
  
\begin{definition}
 We shall call any continuant-preserving permutiple with the property that $q_{n-1}=q'_{n-1}$ and $p_{n-1}=kp'_{n-1}$ 
 a {\it Landess} \footnote{In honor of M. J. Landess, a great mathematics educator.} permutiple.
\end{definition}

\begin{remark}
 Since continuants are invariant under reversal, it is not difficult to show that all $k$-reverse multiples satisfy
 $\langle{}_{-}r_{-}\rangle=\langle{}_{-}r'_{-}\rangle$ and $\langle r_{-} \rangle=k \langle r'_{-} \rangle$.
 Thus, all $k$-reverse multiples are examples of Landess permutiples.
\end{remark}

\begin{theorem}
The concatenation of any two Landess permutiples (both with multiplier $k$) is again
a Landess permutiple (with multiplier $k$).
\label{landess_concat}
\end{theorem}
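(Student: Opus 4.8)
The plan is to get the continuant-preserving half of the conclusion for free from Theorem \ref{concatenation}, and then verify the two defining Landess conditions for the concatenation by a direct computation with the continuant concatenation identity. Write $r=[a_0;a_1,\ldots,a_n]$, $s=[b_0;b_1,\ldots,b_m]$, $t=r\circ s$, and $t'=r'\circ s'$, and let $N=n+m+1$ be the index of the final digit of $t$. Since $r$ is a Landess permutiple and $s$, being Landess, is in particular continuant-preserving, the hypotheses of Theorem \ref{concatenation} are met, so $t$ is already a continuant-preserving permutiple with $t=kt'$; in particular $\langle t\rangle=\langle t'\rangle$. Hence the only thing left to prove is that $t$ satisfies $q_{N-1}=q'_{N-1}$ and $p_{N-1}=kp'_{N-1}$, which in the bracket notation read $\langle {}_{-}t_{-}\rangle=\langle {}_{-}t'_{-}\rangle$ and $\langle t_{-}\rangle=k\langle t'_{-}\rangle$.

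First I would record the two decompositions needed, obtained by applying the continuant concatenation identity $\langle A\circ B\rangle=\langle A\rangle\langle B\rangle+\langle A_{-}\rangle\langle {}_{-}B\rangle$ \cite{benjamin,hensley} to the appropriate truncations of $t$. Deleting the last digit of $t$ removes the last digit of the $s$-block, and deleting the first digit removes the first digit of the $r$-block, so that $\langle t_{-}\rangle=\langle r\rangle\langle s_{-}\rangle+\langle r_{-}\rangle\langle {}_{-}s_{-}\rangle$ and $\langle {}_{-}t_{-}\rangle=\langle {}_{-}r\rangle\langle s_{-}\rangle+\langle {}_{-}r_{-}\rangle\langle {}_{-}s_{-}\rangle$, with the identical formulas holding for the primed quantities.

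Next I would unpack the Landess hypotheses in continuant form. Continuant-preservation of $r$ and $s$ gives $\langle r\rangle=\langle r'\rangle$ and $\langle s\rangle=\langle s'\rangle$ together with $\langle {}_{-}r'\rangle=k\langle {}_{-}r\rangle$ (via Theorem \ref{contin_pres_equiv}), while the defining Landess conditions give $\langle {}_{-}r_{-}\rangle=\langle {}_{-}r'_{-}\rangle$, $\langle r_{-}\rangle=k\langle r'_{-}\rangle$, and the analogous pair $\langle {}_{-}s_{-}\rangle=\langle {}_{-}s'_{-}\rangle$, $\langle s_{-}\rangle=k\langle s'_{-}\rangle$ for $s$. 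Substituting these into the primed expression for $\langle {}_{-}t'_{-}\rangle$, the factor $k$ from $\langle {}_{-}r'\rangle=k\langle {}_{-}r\rangle$ cancels the factor $1/k$ from $\langle s'_{-}\rangle=\tfrac1k\langle s_{-}\rangle$, and the second term is unchanged, yielding $\langle {}_{-}t'_{-}\rangle=\langle {}_{-}t_{-}\rangle$. Substituting $\langle r'\rangle=\langle r\rangle$, $\langle r'_{-}\rangle=\tfrac1k\langle r_{-}\rangle$, $\langle s'_{-}\rangle=\tfrac1k\langle s_{-}\rangle$, and $\langle {}_{-}s'_{-}\rangle=\langle {}_{-}s_{-}\rangle$ into $\langle t'_{-}\rangle$ factors out a common $1/k$, giving $\langle t_{-}\rangle=k\langle t'_{-}\rangle$. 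These are exactly the two Landess conditions for $t$, so $t$ is Landess with multiplier $k$.

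The computation is entirely mechanical, so the one genuine obstacle is bookkeeping: correctly identifying which block of the concatenation loses its first or last digit in each of the four continuant expressions, and keeping the primed relations straight so that the factors of $k$ and $1/k$ cancel as claimed. I would check the decomposition of $\langle {}_{-}t_{-}\rangle$ most carefully, since it is the only one in which both the $r$-block and the $s$-block are truncated.
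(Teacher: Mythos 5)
Your proof is correct and follows the same route as the paper: invoke Theorem \ref{concatenation} to conclude that $r \circ s$ is a continuant-preserving permutiple with multiplier $k$, and then verify the two defining Landess conditions for the concatenation. The paper compresses that second step into the single remark that ``since $s$ is a Landess permutiple, the result follows,'' so your explicit computation of $\langle {}_{-}t_{-}\rangle$ and $\langle t_{-}\rangle$ via the concatenation identity (which is error-free, including the cancellation of the factors $k$ and $1/k$, and which correctly uses the Landess conditions of both $r$ and $s$) simply supplies the details the paper leaves to the reader.
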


\begin{proof}
 Let $r$ and $s$ be two Landess permutiples, both with multiplier $k$.
 By Theorem \ref{concatenation}, $r \circ s$ is a continuant-preserving permutiple with multiplier $k$,
 and since $s$ is a Landess permutiple, the result follows.  
\end{proof}

\begin{remark}
Defining $L_k$ as the collection of all Landess permutiples with multiplier $k$, and
appending to this collection an ``empty'' permutiple, we may then construct the free monoid $L_k^{\ast}$
of $k$-Landess permutiples.
\end{remark}

\begin{corollary}
 The concatenation of any $k$-reverse multiple with itself is again a $k$-reverse multiple.
\end{corollary}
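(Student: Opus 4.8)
The plan is to apply Theorem \ref{concatenation} in the special case $s=r$ and then to observe that block-wise reversal of a doubled digit string coincides with reversal of the whole string. First I would fix a $k$-reverse multiple $r=[a_0;a_1,\ldots,a_n]$, so that by definition $r'=[a_n;a_{n-1},\ldots,a_0]$ and $r=kr'$. Since every $k$-reverse multiple is continuant-preserving (by the Remark following Definition \ref{contin_pres_def}) and is in fact a Landess permutiple (by the Remark preceding Theorem \ref{landess_concat}), both hypotheses of Theorem \ref{concatenation} are satisfied when we take $s=r$ and $\tau=\sigma$ equal to the reversal permutation. The theorem then yields that $r\circ r$ is a continuant-preserving permutiple satisfying $r\circ r = k\,(r'\circ r')$.

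The crux of the argument is to identify $r'\circ r'$ as the reversal of $r\circ r$. The digit string of $r\circ r$ is $a_0,a_1,\ldots,a_n,a_0,a_1,\ldots,a_n$, and reversing this entire string produces $a_n,\ldots,a_1,a_0,a_n,\ldots,a_1,a_0$. On the other hand, $r'\circ r'$ is formed by reversing each of the two identical blocks separately, and so it has digit string $a_n,\ldots,a_1,a_0,a_n,\ldots,a_1,a_0$ as well. Because the two strings agree, $r'\circ r'$ is precisely the reversal of $r\circ r$. Combining this with $r\circ r = k\,(r'\circ r')$ shows that $r\circ r$ equals $k$ times the continued fraction obtained by reversing its own digit string, which is exactly the statement that $r\circ r$ is a $k$-reverse multiple.

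The argument is short, and the only point requiring care is the middle step. In general, Theorem \ref{concatenation} guarantees only that the concatenation of two reverse multiples is a Landess permutiple, since the block-wise reversal it produces need not coincide with the full reversal of the concatenated string. The main obstacle, such as it is, is therefore recognizing why self-concatenation is special: reversing two \emph{identical} blocks independently produces the same sequence as reversing their concatenation as a whole. Once this is observed, the conclusion is immediate from the definition of a $k$-reverse multiple, with no further computation required.
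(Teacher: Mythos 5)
Your proof is correct and takes essentially the same route as the paper: the paper invokes Theorem \ref{landess_concat} (itself a one-line application of Theorem \ref{concatenation}) to conclude that the self-concatenation is a permutiple with multiplier $k$, and then dismisses the remaining identification as ``a straightforward argument.'' Your write-up applies Theorem \ref{concatenation} directly and supplies exactly that omitted step --- the observation that reversing two \emph{identical} blocks separately yields the same digit string as reversing the whole doubled string --- so it is, if anything, more complete than the paper's own proof.
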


\begin{proof}
 Since every $k$-reverse multiple is a Landess permutiple, we may apply Theorem \ref{landess_concat}
 to deduce that the concatenation is also a permutiple with multiplier $k$. The rest is a straightforward 
 argument.
\end{proof}

From the above corollary, it follows by induction that a palindromic concatenation of $k$-reverse multiples
is again a $k$-reverse multiple. More formally we have the following.

\begin{corollary}
 Suppose $r_0$, $r_1$, $\ldots$, $r_m$ are all $k$-reverse multiples such that $r_j=r_{m-j}$ for all $0 \leq j \leq m$,
 then $r_0 \circ r_1 \circ \cdots \circ r_m$ is also a $k$-reverse multiple. 
\end{corollary}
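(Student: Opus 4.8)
The plan is to pass to the $2\times 2$ matrix picture of continued fractions, where the whole statement collapses to a single symmetry of a matrix product. First I would associate to each finite continued fraction $t=[c_0;c_1,\ldots,c_N]$ the matrix
$$M(t)=\begin{pmatrix} c_0 & 1\\ 1 & 0\end{pmatrix}\cdots\begin{pmatrix} c_N & 1\\ 1 & 0\end{pmatrix}=\begin{pmatrix}\langle t\rangle & \langle t_-\rangle\\ \langle{}_-t\rangle & \langle{}_-t_-\rangle\end{pmatrix}.$$
Under this correspondence, digit-string concatenation becomes matrix multiplication, $M(c_1\circ c_2)=M(c_1)M(c_2)$, and reversal of the digit string becomes transposition, $M(\bar t)=M(t)^{T}$ (the latter because each elementary factor is symmetric). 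These are just the continuant-concatenation and reversal identities of \cite{benjamin,hensley} already invoked above, recast in matrix form.

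Next I would translate the defining relation of a $k$-reverse multiple into this language. Since $\langle\bar t\rangle=\langle t\rangle$ and $\langle{}_-\bar t\rangle=\langle t_-\rangle$ by reversal-invariance of continuants, the equation $t=k\bar t$ is equivalent to $\langle t_-\rangle=k\langle{}_-t\rangle$, that is, to $M(t)_{12}=kM(t)_{21}$. The key observation is that, writing $D=\mathrm{diag}(k,1)$, this single scalar condition is exactly the matrix symmetry
$$M(t)=D\,M(t)^{T}D^{-1}.$$

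With this in hand the argument is short. Set $M_j=M(r_j)$ and $M=M(r_0\circ r_1\circ\cdots\circ r_m)=M_0M_1\cdots M_m$. Each $r_j$ is a $k$-reverse multiple, so $M_j=DM_j^{T}D^{-1}$ for every $j$. Distributing $D$ through the transpose of the product gives
$$D\,M^{T}D^{-1}=D\,M_m^{T}\cdots M_0^{T}\,D^{-1}=(DM_m^{T}D^{-1})\cdots(DM_0^{T}D^{-1})=M_mM_{m-1}\cdots M_0.$$
Finally I would invoke the palindrome hypothesis $r_j=r_{m-j}$, equivalently $M_j=M_{m-j}$, to recognize $M_mM_{m-1}\cdots M_0=M_0M_1\cdots M_m=M$. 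Hence $M=DM^{T}D^{-1}$, i.e. $M_{12}=kM_{21}$; retranslating, $r_0\circ\cdots\circ r_m=k\,\overline{r_0\circ\cdots\circ r_m}$, so the concatenation is a $k$-reverse multiple.

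The main obstacle is isolating the conjugation identity $M(t)=DM(t)^{T}D^{-1}$: it is precisely what makes the transpose of a product reflect the order of its factors while keeping each factor a reverse multiple, so that the palindrome can fold the reversed product back onto itself. Without it one is left with the bare condition $M_{12}=kM_{21}$, which (as a one-line check shows) is \emph{not} preserved under general products, confirming that some use of the palindrome is unavoidable. The remaining care points are routine: verifying the three dictionary facts in the paper's notation, and noting that the value identity $t=k\bar t$ needs no canonical-form hypothesis. I would also mention the inductive route hinted above—peel the outer pair $r_0,\,r_m$ and reduce to showing that $t\circ u\circ t$ is a $k$-reverse multiple whenever $t,u$ are—which is the sandwich analogue of the preceding corollary and follows from the very same identity $M(t\circ u\circ t)=TUT=D(TUT)^{T}D^{-1}$.
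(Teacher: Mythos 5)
Your proof is correct, and it takes a genuinely different route from the paper's. The paper obtains this corollary from its Landess machinery: every $k$-reverse multiple is a Landess permutiple, Theorem \ref{landess_concat} shows concatenations of Landess permutiples are again Landess permutiples (hence permutiples with multiplier $k$), the preceding corollary handles $r\circ r$, and the palindromic case is then asserted to follow ``by induction'' --- with both the identification of the resulting permutation as the reversal and the sandwich step ($t\circ u\circ t$ is a $k$-reverse multiple when $t$ and $u$ are) left as unstated details. You instead phrase everything in the transfer-matrix picture: your dictionary (concatenation $=$ matrix product, reversal $=$ transpose) repackages the same continuant identities of \cite{benjamin,hensley} the paper uses, but your key lemma --- that the reverse-multiple condition $\langle t_-\rangle = k\langle{}_-t\rangle$ is precisely the conjugation symmetry $M(t)=D\,M(t)^{T}D^{-1}$ with $D=\mathrm{diag}(k,1)$ --- has no counterpart in the paper, and it is what lets you dispense with induction entirely: transposition reverses the factor order, the symmetry of each factor restores each $M_j$, and the palindrome hypothesis folds $M_m\cdots M_0$ back onto $M_0\cdots M_m$. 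I verified the pivotal equivalence: for $M=\left(\begin{smallmatrix} a & b\\ c & d\end{smallmatrix}\right)$ one has $DM^{T}D^{-1}=\left(\begin{smallmatrix} a & kc\\ b/k & d\end{smallmatrix}\right)$, so the symmetry is exactly $M_{12}=kM_{21}$, which by reversal-invariance of continuants is the value identity $t=k\bar t$; your chain $DM^{T}D^{-1}=(DM_m^{T}D^{-1})\cdots(DM_0^{T}D^{-1})=M_m\cdots M_0=M$ is sound. What your route buys: a self-contained, non-inductive core argument that makes the role of the palindrome hypothesis transparent, needs no canonical-form or Landess bookkeeping, and yields the sandwich lemma (which the paper's induction implicitly requires) as a free byproduct. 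What the paper's route buys: the corollary appears as an instance of its general Landess-concatenation theory, and along the way one records that the concatenation is moreover continuant-preserving and Landess --- facts your argument could recover from the other matrix entries but does not need.
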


 The next theorem reveals that reverse multiples are not the only examples of Landess permutiples.

\begin{theorem}
 Every perfect permutiple is also a Landess permutiple.
\end{theorem}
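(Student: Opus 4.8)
The plan is to prove, by induction on the convergent index, a stronger level-by-level comparison between the convergents of $r$ and those of $r'$, from which the two defining Landess conditions $q_{n-1}=q'_{n-1}$ and $p_{n-1}=kp'_{n-1}$ drop out at once. By Corollary \ref{perf_even} a perfect permutiple is already continuant-preserving and has an even number of digits, so $n$ is odd and $n-1$ is even; hence it suffices to control the convergents at even indices, and the continuant-preserving half of the Landess definition is free.

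First I would record the arithmetic content of Definition \ref{perfect_def}: the alternating ratios of Equation \ref{perfect_eq} say that $a_j=ka_{\sigma(j)}$ for even $j$, while $a_{\sigma(j)}=ka_j$ for odd $j$. The invariant I would then establish is that, for all $0 \le j \le n$,
\[
p_j=kp'_j,\ \ q_j=q'_j \quad (j \text{ even}), \qquad p_j=p'_j,\ \ q'_j=kq_j \quad (j \text{ odd}).
\]
The base cases are immediate from $p_{-1}=p'_{-1}=1$, $q_{-1}=q'_{-1}=0$, together with $p_0=a_0=ka_{\sigma(0)}=kp'_0$ and $q_0=q'_0=1$, which match the even-index pattern.

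For the inductive step I would feed the perfect relations into the convergent recurrences $p_j=a_jp_{j-1}+p_{j-2}$, $q_j=a_jq_{j-1}+q_{j-2}$ and their primed analogues (with $a_{\sigma(j)}$ in place of $a_j$). For instance, when $j$ is even, substituting $a_j=ka_{\sigma(j)}$ along with the inductive identities $q_{j-1}=q'_{j-1}/k$ (odd $j-1$) and $q_{j-2}=q'_{j-2}$ (even $j-2$) collapses $q_j=a_jq_{j-1}+q_{j-2}$ exactly into $a_{\sigma(j)}q'_{j-1}+q'_{j-2}=q'_j$, the factors of $k$ cancelling by design; the numerator relation $p_j=kp'_j$ and the two odd-index cases $p_j=p'_j$, $q'_j=kq_j$ follow identically, each reducing to the matching primed recurrence once the $k$'s telescope.

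Applying the invariant at the even index $j=n-1$ then yields precisely $q_{n-1}=q'_{n-1}$ and $p_{n-1}=kp'_{n-1}$; combined with continuant-preservation, these are exactly the defining conditions of a Landess permutiple. I expect the only genuine difficulty to be guessing the correct inductive statement — the parity-alternating pattern mirroring the alternating ratios of Equation \ref{perfect_eq}, with the roles of $r$ and $r'$ swapping between even and odd indices — rather than the verifications themselves, which are routine; the one point to watch is that all four cases (even/odd index $\times$ numerator/denominator) must be checked separately, since each uses a different pair of the preceding identities.
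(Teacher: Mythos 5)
Your proof is correct and rests on essentially the same mechanism as the paper's: a parity-alternating induction on the convergent recurrences, using $a_j = ka_{\sigma(j)}$ for even $j$ and $a_{\sigma(j)} = ka_j$ for odd $j$ to establish $p_j = kp'_j$, $q_j = q'_j$ at even indices and $p_j = p'_j$, $q'_j = kq_j$ at odd indices. The only difference is one of routing, and it favors you: the paper obtains $p_{n-1} = kp'_{n-1}$ indirectly, by combining continuant-preservation ($p_n = p'_n$), the recurrence at index $n$, and the inductive fact $p_{n-2} = p'_{n-2}$, and then waves at ``a similar argument'' for $q_{n-1} = q'_{n-1}$; you instead carry all four relations in a single invariant and read both Landess conditions directly off the even index $n-1$, which is cleaner and makes the $q$-side explicit rather than deferred.
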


\begin{proof}
 Let $r=[a_0;a_1,\ldots,a_n]$ be a perfect $(\sigma,k)$-permutiple.
 By Corollary \ref{perf_even}, every perfect permutiple is continuant-preserving. 
 Thus, $p_n=p'_n$ so that $a_n p_{n-1}+p_{n-2}=a_{\sigma(n)} p'_{n-1}+p'_{n-2}$.
 Since $r$ must have an even number of digits, $n$ is odd, and since $r$ is perfect, we have that 
 $a_{\sigma(n)}=ka_n$. Therefore, $a_n p_{n-1}+p_{n-2}=a_n k p'_{n-1}+p'_{n-2}$.
 Now, for any perfect permutiple, it is a straightforward induction argument to prove directly using the recursive definition 
 of the continuant that 
 $p_{j-1}=K_j(a_0, a_1,\ldots, a_{j-1})=kK_j(a_{\sigma(0)}, a_{\sigma(1)}, \ldots a_{\sigma(j-1)})=kp'_{j-1}$
 when $j$ is odd, and 
 $p_{j}=K_{j+1}(a_0, a_1,\ldots, a_{j})=K_{j+1}(a_{\sigma(0)}, a_{\sigma(1)}, \ldots a_{\sigma(j)})=p'_j$
 when $j$ is even. Therefore, $p_{n-2}=p'_{n-2}$.
 Hence, by the above, we have shown that $p_{n-1}=k p'_{n-1}$.
 
 A similar argument proves that $q_{n-1}=q'_{n-1}$.
\end{proof}

\begin{corollary}
 The concatenation of any two perfect permutiples is again a perfect permutiple.
\end{corollary}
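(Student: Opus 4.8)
The plan is to reduce the statement to the digit-level characterization of perfection in Definition \ref{perfect_def} and to lean on the preceding theorem (every perfect permutiple is a Landess permutiple) together with Theorem \ref{concatenation}. Let $r=[a_0;a_1,\ldots,a_n]$ be a perfect $(\sigma,k)$-permutiple and $s=[b_0;b_1,\ldots,b_m]$ a perfect $(\tau,k)$-permutiple. First I would observe that, being perfect, both $r$ and $s$ are Landess permutiples and hence satisfy the hypotheses of Theorem \ref{concatenation}; that theorem already yields $r\circ s=k(r'\circ s')$, a continuant-preserving permutiple with multiplier $k$. Writing $c_0,c_1,\ldots$ for the digit string of $r\circ s$, the relevant permutation is the block permutation $\rho$ defined by $\rho(j)=\sigma(j)$ for $0\le j\le n$ and $\rho(n+1+i)=(n+1)+\tau(i)$ for $0\le i\le m$. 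By construction $r'\circ s'=[c_{\rho(0)};c_{\rho(1)},\ldots]$, so $r\circ s$ is a genuine $(\rho,k)$-permutiple, and it remains only to verify perfection.

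What is left is to check that $r\circ s$ satisfies the perfection equation \ref{perfect_eq} relative to $\rho$, namely $c_j=k\,c_{\rho(j)}$ for even $j$ and $c_{\rho(j)}=k\,c_j$ for odd $j$. For the indices $0\le j\le n$ this is immediate from the perfection of $r$, since there $c_j=a_j$ and $\rho$ restricts to $\sigma$. The crux is the second block, where $j=n+1+i$, so that $c_j=b_i$ and $c_{\rho(j)}=b_{\tau(i)}$. Here I would invoke Corollary \ref{perf_even}: because $r$ is perfect it has an even number of digits, so $n+1$ is even and the shift by $n+1$ preserves parity. Consequently the parity of $j=n+1+i$ agrees with the parity of $i$, and the desired relation $b_i=k\,b_{\tau(i)}$ (for even $i$) or $b_{\tau(i)}=k\,b_i$ (for odd $i$) is precisely the perfection condition already satisfied by $s$.

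The main obstacle is exactly this parity bookkeeping across the seam of the concatenation, and it is the only place where the argument could fail. If $r$ had an odd number of digits, the shift by $n+1$ would flip parities, and the alternating ratio pattern $k,1/k,k,1/k,\ldots$ demanded by Definition \ref{perfect_def} would be misaligned throughout the second block, destroying perfection. Corollary \ref{perf_even} forecloses this, so the alternation continues seamlessly past the junction. Collecting these observations shows that $r\circ s$ satisfies Definition \ref{perfect_def} with permutation $\rho$ and multiplier $k$, which establishes the corollary.
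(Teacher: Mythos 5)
Your proof is correct and follows precisely the route the paper intends: the corollary appears without proof immediately after the theorem that every perfect permutiple is a Landess permutiple, and your argument---invoking that theorem so that Theorem \ref{concatenation} applies and yields a $(\rho,k)$-permutiple $r\circ s = k(r'\circ s')$ for the block permutation $\rho$, then verifying Equation \ref{perfect_eq} digit by digit---supplies exactly the details the paper leaves implicit. Your identification of the parity bookkeeping at the seam as the crux is also on target, since the even digit count of $r$ guaranteed by Corollary \ref{perf_even} is what keeps the alternating pattern $k, 1/k, k, 1/k, \ldots$ aligned through the second block.
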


Landess permutiples are not always perfect or reverse multiples
as shown by $[2; 1, 5, 1, 2] = 2 \cdot [1; 2, 2, 1, 5]$.
The reader will also notice that this is a non-symmetric example. 
Also, not every permutiple is a Landess permutiple as the example
$[11; 1, 10, 2, 3]=9 \cdot [1; 3, 11, 10, 2]$ proves.

\section{Future work and concluding remarks}
By now, the reader is sure to have noticed the lack of examples of permutiples which are not continuant-preserving.
This is because, despite hours of computer search time, we have been unsuccessful in finding one. 
Yet, a demonstration that none exist has so far eluded us. 

The lack of success in finding a counterexample does not necessarily cast overwhelming doubt upon the possibility 
that one exists. Given the diversity of examples and permutiple types we have already encountered, 
all of which occur for a relatively low number of digits, a
counterexample would not necessarily be surprising if we were to expand our search.
However, with the present results we have, we are still limited to either brute force, or random search.
Consequently, either increasing the bound on the digits, or the number of digits, 
incurs a very substantial increase in search time. 
We also  mention that when comparing expressions of the form 
$R=[b_0;b_1,\ldots,b_n]$ and $R'=[b_{\sigma(0)}; b_{\sigma(1)}, \ldots, b_{\sigma(n)}]$
(not necessarily permutiples), we can construct continued fractions which make the ratios of numerators of $n$th 
convergents as large as we please. Large ratios typically occur when there are large differences between digits.  
These considerations, combined with computational limitations already mentioned, mean that a counterexample
may very well exist beyond the bounds we have been able to reasonably check. 
However, as of the writing of this paper, we have still been unable to find one. 
We therefore formally pose the following question and leave it as an open problem.

\begin{center}
{\it Are all permutiples continuant-preserving?}
\end{center}

We also mention that although not every Landess permutiple is symmetric,
computer-generated evidence does suggest that every symmetric permutiple is a Landess permutiple.
However, to show this is the case, we must have that all symmetric permutiples are continuant-preserving.
We also leave these as open questions.

\begin{center}
{\it Are all symmetric permutiples also Landess permutiples?}
\end{center}

\begin{center}
{\it Are all symmetric permutiples even continuant-preserving?}
\end{center}

For the finite case, we summarize the permutiple types we have observed so far, as well as the above conjectures, 
with the figure below. A dashed border indicates that it is unknown if the pictured containment holds. 
In the figure we have also included examples from the above exposition which prove strict 
containment given that the containment actually holds.
\begin{center}
\includegraphics[width=10cm]{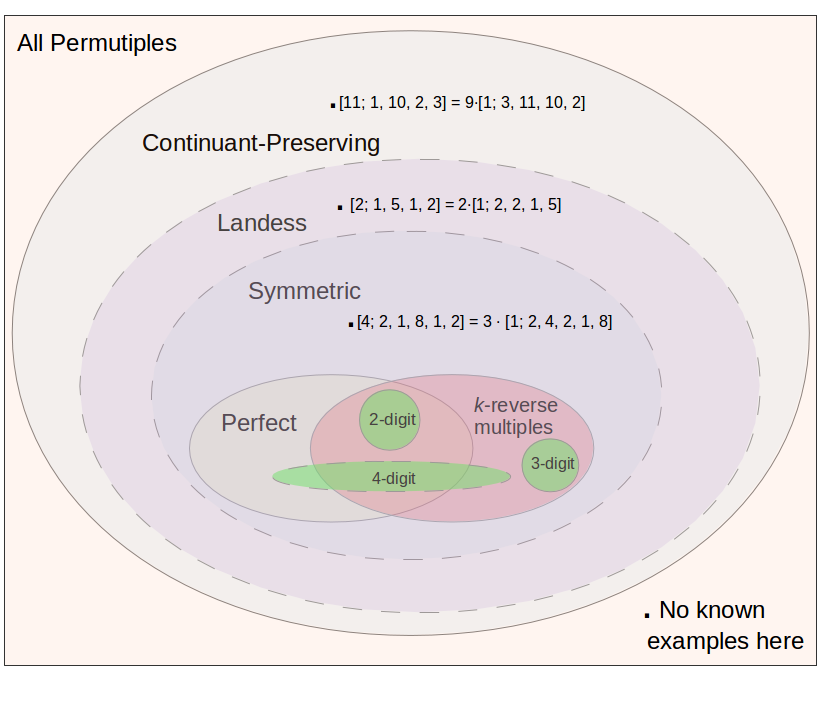} 
\end{center}

In addition to the questions we have already left to the reader, there are clearly other avenues of inquiry to pursue. 
In particular, we mention the permutiple problem in the settings of both general and infinite continued fraction representations.

Examples of the latter readily come to mind; the real number $1+\sqrt{3}$ is an infinite simple
continued fraction permutiple since $[2;1,2,1,\ldots]=1+\sqrt{3}=2 \cdot \left(\frac{1+\sqrt{3}}{2}\right)=2 \cdot [1;2,1,2,\ldots]$.
In fact, with a little more effort, and some classical results \cite{olds}, we can show that any reduced quadratic surd 
$r=\frac{a+\sqrt{b}}{c}$
is an infinite permutiple precisely when $\frac{b-a^2}{c}$ is an integer since the multipler $k$ is equal to this quantity.

The notion of perfect permutiples carries over very naturally to the infinite setting; 
Definition \ref{perfect_def} requires essentially no modification.
Also, we only need to slightly change the statement of Theorem \ref{perf_equiv} to accomodate the above definitions,
and its proof requires no modification. The periodic examples mentioned above are perfect.
  
Infinite concatenations of Landess permutiples can yield non-periodic infinite examples.
For example, the continued fraction $[k s_0;s_0,ks_1,s_1,ks_2,s_2,\ldots]$ is an infinite perfect $(\sigma,k)$-permutiple
where $\sigma(j)=j+(-1)^{j}$, and each $s_j>0$ is a free integer parameter for all $j \geq 0$.
This example is a concatenation of 2-digit perfect permutiples.
Thus, for example, $[2;1,8,4,32,16,\ldots]=2 \cdot [1;2,4,8,16,32,\ldots]$ is an infinite perfect permutiple
which is non-periodic.  

What it means for an infinite simple continued fraction to be continuant-preserving
is not difficult to generalize either, although it does require a little more work. 
Numerators of $n$th convergents, for example 
$K_{n+1}(2,1,2,1,\ldots,a_n)$ and $K_{n+1}(1,2,1,2,\ldots,a_{\sigma(n)})$,
are not equal in all cases. What matters is that they are equal for an infinite number of cases;
we say that an infinite $(\sigma,k)$-permutiple, $[a_0; a_{1}, a_2 \ldots]$, is {\it asymptotically continuant-preserving} if 
$$\liminf_{n\rightarrow \infty} \left| K_{n+1}(a_0,a_1,\ldots,a_n) -K_{n+1}(a_{\sigma(0)},a_{\sigma(1)},\ldots,a_{\sigma(n)})\right|=0.$$
Analogous to the finite case, the reader will notice that in the case of purely periodic permutiples
considered above, the numerators of the corresponding reduced quadratic surds are equal.
It is also not difficult to prove an infinite analogue to Theorem \ref{contin_pres_equiv}.
 \begin{theorem}
  For any infinite $(\sigma,k)$-permutiple, $r=[a_0;a_1,a_2,a_3 \ldots]$, the following are equivalent:
  \begin{enumerate}
    \item $r$ is asymptotically continuant-preserving, 
    \item $\liminf\limits_{n\rightarrow \infty} |K_{n}(a_{\sigma(1)},\ldots,a_{\sigma(n)})-k K_{n}(a_1,\ldots,a_n)|=0$,
    \item $\liminf\limits_{n\rightarrow \infty} |\gamma_0 \gamma_1 \cdots \gamma_n-k \gamma'_0 \gamma'_1 \cdots \gamma'_n|=0$.
  \end{enumerate}
 \end{theorem}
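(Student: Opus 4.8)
The plan is to transcribe the proof of Theorem \ref{contin_pres_equiv} into the limiting setting, the only genuinely new ingredient being that the terminal tail no longer vanishes. First I would record the exact continued-fraction identity underlying that proof, now kept in full generality: the induction giving $\gamma_0\gamma_1\cdots\gamma_j=\frac{1}{q_j/\gamma_j+q_{j-1}}$ is valid for every $j\ge 0$, and substituting $1/\gamma_j=a_{j+1}+\gamma_{j+1}$ yields
\[
\gamma_0\gamma_1\cdots\gamma_j=\frac{1}{q_{j+1}+q_j\gamma_{j+1}},
\]
with the analogous formula for the primed data. In the finite case the trailing tail was $0$, collapsing this to $1/q_n$; here I would instead retain the correction term $q_j\gamma_{j+1}$ and extract from $0<\gamma_{j+1}<1$ and $q_j<q_{j+1}$ the two-sided estimate $\frac{1}{2q_{j+1}}<\gamma_0\cdots\gamma_j\le\frac{1}{q_{j+1}}$. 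In particular $\gamma_0\cdots\gamma_j\to 0$, and likewise $\gamma'_0\cdots\gamma'_j\to 0$.

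For the equivalence of items 1 and 2, I would reuse Equation \ref{continuant}, but now read asymptotically: since $p_n/q_n\to r$, $p'_n/q'_n\to r'$ and $r=kr'$, the integer $p_nq'_n-kp'_nq_n=q_nq'_n(k\epsilon'_n-\epsilon_n)$, with $\epsilon_n=r-p_n/q_n$ and $\epsilon'_n=r'-p'_n/q'_n$, governs both differences. Using $q_nr-p_n=(-1)^n\gamma_0\cdots\gamma_n$ this rewrites as $(-1)^n(kq_n\,\gamma'_0\cdots\gamma'_n-q'_n\,\gamma_0\cdots\gamma_n)$, so that $|p_n-p'_n|$ being small along a subsequence and $|q'_n-kq_n|$ being small along a subsequence are tied to the same obstruction; the transfer between the two is the exact analogue of dividing $p_nq'_n=kp'_nq_n$ by $p_n$ or by $q_n$ in the finite proof, carried out along the indices realizing the relevant $\liminf$. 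Finally, for 2 $\Longleftrightarrow$ 3 I would feed the displayed identity and its two-sided bound into the two quantities: $\gamma_0\cdots\gamma_n$ and $k\gamma'_0\cdots\gamma'_n$ differ by an amount controlled by $q'_{n+1}-kq_{n+1}$ over a denominator that tends to infinity, and taking $\liminf$ converts a small $|q'_n-kq_n|$ into a small $|\gamma_0\cdots\gamma_n-k\gamma'_0\cdots\gamma'_n|$ and back.

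The hard part will be that every exact equality in the finite argument becomes merely asymptotic. Because the terminal tail is nonzero, $\gamma_0\cdots\gamma_{n-1}=1/q_n$ is replaced by an estimate, and because $r=kr'$ only pins the convergents together in the limit, the clean algebraic identity $p_nq'_n=kp'_nq_n$ degrades to a bounded integer obstruction that need not vanish at every $n$. The crux is therefore to show that this obstruction returns to $0$, equivalently that $|p_n-p'_n|$ or $|q'_n-kq_n|$ returns arbitrarily close to $0$ along a common subsequence; once that subsequence is produced, the two-sided bound on $\gamma_0\cdots\gamma_j$ lets every implication pass to the $\liminf$ exactly as in Theorem \ref{contin_pres_equiv}.
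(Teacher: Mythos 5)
The paper never actually proves this theorem: it is stated in the concluding section with only the remark that it ``is not difficult to prove,'' so there is no proof of record to compare against, and your proposal must stand on its own. Its first half essentially does. Since $p_n$, $p'_n$, $q_n$, $q'_n$ are integers, items \emph{1} and \emph{2} say that $p_n=p'_n$, respectively $q'_n=kq_n$, hold for infinitely many $n$, and your identities $q_nr-p_n=(-1)^n\gamma_0\cdots\gamma_n$ and $q'_nr'-p'_n=(-1)^n\gamma'_0\cdots\gamma'_n$, together with $r=kr'$, give
\[
p_n-p'_n \;=\; r'\bigl(kq_n-q'_n\bigr)-(-1)^n\bigl(\gamma_0\cdots\gamma_n-\gamma'_0\cdots\gamma'_n\bigr),
\]
whose last term tends to $0$ by your two-sided bound; since $r'>0$ is a fixed constant, any subsequence realizing item \emph{1} realizes item \emph{2} and conversely. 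This is cleaner than routing through $p_nq'_n-kp'_nq_n$; note also that your claim that this integer obstruction is \emph{bounded} is unjustified, since nothing controls the ratios $q_n/q'_{n+1}$ and $q'_n/q_{n+1}$ for an arbitrary infinite permutation.

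The genuine gap is the direction $(\mathit{3})\Rightarrow(\mathit{2})$. Your own estimates show $\gamma_0\cdots\gamma_n\le 1/q_{n+1}\to 0$ and $k\gamma'_0\cdots\gamma'_n\le k/q'_{n+1}\to 0$, hence $|\gamma_0\cdots\gamma_n-k\gamma'_0\cdots\gamma'_n|\to 0$ for \emph{every} infinite permutiple (indeed for any two infinite continued fractions), with no hypothesis whatsoever: item \emph{3} as printed is vacuously true, so no local estimate can convert it into the genuinely arithmetic condition \emph{2}. Concretely, writing
\[
\gamma_0\cdots\gamma_n-k\gamma'_0\cdots\gamma'_n
=\frac{\bigl(q'_{n+1}+q'_n\gamma'_{n+1}\bigr)-k\bigl(q_{n+1}+q_n\gamma_{n+1}\bigr)}
{\bigl(q_{n+1}+q_n\gamma_{n+1}\bigr)\bigl(q'_{n+1}+q'_n\gamma'_{n+1}\bigr)},
\]
the left side is small because the denominator exceeds $q_{n+1}q'_{n+1}\to\infty$, not because the numerator is small; recovering $|q'_{n+1}-kq_{n+1}|$ requires multiplying back by this unbounded denominator, which destroys smallness. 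In the finite case the corresponding step was cross-multiplication of an \emph{exact} equality, which is harmless; it does not survive the passage to ``$\liminf=0$.'' So your ``and back'' transfer fails, and what your preliminary work actually reveals is that the theorem as literally stated is defective: it asserts that the vacuous condition \emph{3} is equivalent to conditions \emph{1} and \emph{2}, i.e., that every infinite permutiple is asymptotically continuant-preserving, which your argument certainly does not prove and which there is no reason to believe. A sound treatment would flag this and replace item \emph{3} by a renormalized version --- for instance one phrased in terms of the reciprocals of the tail products, which for the paper's perfect examples vanish exactly --- before attempting any equivalence with items \emph{1} and \emph{2}.
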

 For any perfect permutiple, item {\it 2} of Theorem \ref{perf_equiv} gives us that 
 $\gamma_0 \gamma_1 \cdots \gamma_n=k \gamma'_0 \gamma'_1 \cdots \gamma'_n$
  for all even $n$. An application of the above theorem gives us the following corollary. 
  \begin{corollary}
     Every infinite perfect permutiple is asymptotically continuant-preserving.
  \end{corollary}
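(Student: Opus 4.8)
The plan is to reduce the corollary to item~3 of the infinite analogue of Theorem~\ref{contin_pres_equiv} stated just above, which asserts that asymptotic continuant-preservation is equivalent to $\liminf_{n\to\infty}|\gamma_0\gamma_1\cdots\gamma_n - k\gamma'_0\gamma'_1\cdots\gamma'_n| = 0$. Since the $\liminf$ of a nonnegative sequence vanishes as soon as infinitely many of its terms equal $0$, it suffices to exhibit infinitely many $n$ for which $\gamma_0\gamma_1\cdots\gamma_n = k\gamma'_0\gamma'_1\cdots\gamma'_n$ holds exactly.

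First I would invoke item~2 of Theorem~\ref{perf_equiv}, whose statement and proof carry over to the infinite setting without modification: for a perfect permutiple the tails satisfy $k = \gamma_0/\gamma'_0 = \gamma'_1/\gamma_1 = \gamma_2/\gamma'_2 = \cdots$, that is, $\gamma_j = k\gamma'_j$ for every even $j$ and $\gamma_j = \gamma'_j/k$ for every odd $j$. Each $\gamma_j$ and $\gamma'_j$ lies in $(0,1)$, and is in particular nonzero since the expansions are infinite, so all of these ratios are legitimate.

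Next I would form the product of ratios $\prod_{j=0}^{n}(\gamma_j/\gamma'_j)$, in which each even index contributes a factor $k$ and each odd index a factor $1/k$. When $n$ is even the even indices $0,2,\ldots,n$ outnumber the odd indices $1,3,\ldots,n-1$ by exactly one, so the product collapses to $k$; hence $\gamma_0\gamma_1\cdots\gamma_n = k\gamma'_0\gamma'_1\cdots\gamma'_n$ for every even $n$. This supplies the required infinite family of exact equalities, so item~3 of the infinite theorem holds, and the corollary follows by the cited equivalence.

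There is essentially no serious obstacle here, since this is a short corollary of machinery already in place. The only point demanding care is the parity bookkeeping: exact equality of the two products holds only along the even indices and fails for odd $n$, where the surplus factor flips to $1/k$. This is harmless precisely because the defining condition is phrased with $\liminf$ rather than $\lim$, so the even subsequence alone already forces the value $0$; indeed, the full limit need not exist, which is exactly why the infinite definition was framed with a $\liminf$ in the first place.
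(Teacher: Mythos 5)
Your proposal is correct and follows essentially the same route as the paper: the paper likewise invokes item~2 of Theorem~\ref{perf_equiv} (carried over to the infinite setting) to get $\gamma_0\gamma_1\cdots\gamma_n = k\gamma'_0\gamma'_1\cdots\gamma'_n$ exactly for all even $n$, and then applies item~3 of the infinite analogue of Theorem~\ref{contin_pres_equiv} to conclude via the $\liminf$. Your explicit parity bookkeeping of the telescoping product of ratios just fills in a detail the paper leaves implicit.
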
   

Generalizations of other notions presented in the finite case are possible, but we shall leave them 
for the reader to rediscover and develop more fully on their own. 

\section{Acknowledgment}
The author would like to thank Casey Bonavia of Columbia College for proofreading the manuscript prior to 
its submission for publication.

\bigskip
\hrule
\bigskip

\noindent 2010 {\it Mathematics Subject Classification}: Primary 11A55; Secondary 37B10, 11B85.

\noindent \emph{Keywords:} continued fraction, permutiple, reverse multiple, palintiple.

\bigskip
\hrule
\bigskip

\noindent (Concerned with sequences
\seqnum{A001232}, \seqnum{A002530}, \seqnum{A008918}, \seqnum{A008919}, \seqnum{A031877}, \seqnum{A146326}, \seqnum{A169824}.)


\begin{thebibliography}{3}

\bibitem{benjamin}
A. T. Benjamin and D. Zeilberger, Pythagorean primes and palindromic continued fractions, {\it Integers} {\bf 5}
 (2005), \#A30.

\bibitem{guttman}
S. Guttman, On cyclic numbers, \textit{The Amer. Math. Monthly} \textbf{41} (1934), 159--166.

\bibitem{hensley}
D. Hensley, {\it Continued Fractions}, World Scientific, 2006.

\bibitem{holt}
B. V. Holt, Some general results and open questions on palintiple numbers, {\it Integers} {\bf 14} (2014), \#A42. 

\bibitem{holt_2}
B. V. Holt, Derived palintiple families and their palinomials, {\it Integers} {\bf 16} (2016), \#A27. 

\bibitem{holt_3}
B. V. Holt, On permutiples having a fixed set of digits.
Preprint, 2015, available at \url{https://arxiv.org/pdf/1511.02033.pdf}.
Submitted to {\it Integers} for review.

\bibitem{kalman}
D. Kalman, Fractions with cycling digit patterns,
\textit{The College Math. J.} \textbf{27} (1996), 109--115.

\bibitem{kendrick_1}
L. H. Kendrick, Young graphs: 1089 et al., {\it J. Integer Seq.} {\bf 18} (2015), Article 15.9.7.

\bibitem{kendrick_2}
L. H. Kendrick, Data for Young graphs and their isomorphism classes, 
\url{https://sites.google.com/site/younggraphs/home}.

\bibitem{olds}
C. D. Olds, \textit{Continued Fractions}, Random House, 1963.

\bibitem{pudwell}
L. Pudwell, Digit reversal without apology, {\it Math. Mag.} {\bf 80} (2007), 129--132.

\bibitem{sloane}
N. J. A. Sloane, 2178 and all that, {\it Fibonacci Quart.} {\bf 52} (2014), 99--120.


\bibitem{young_1}
A. L. Young, $k$-reverse multiples, {\it Fibonacci Quart.} {\bf 30} (1992), 126--132.

\end{thebibliography}
\end{document}